\def\VER{{{\bf VI}:34.0}}
\newcommand\Cref[1]{{Corollary~\ref{#1}}}
\newtheorem{thm}{Theorem}[section] 
\newtheorem{defn}[thm]{Definition}
\newtheorem{exmpl}[thm]{Example}
\newtheorem{lem}[thm]{Lemma}
\newtheorem{prop}[thm]{Proposition}
\newtheorem{rem}[thm]{Remark}
\newcommand\Dref[1]{{Definition~\ref{#1}}}
\newcommand\M[1][d]{{\operatorname{M}_{#1}}} 
\def\CI{\mathcal I}
\def\CJ{\mathcal J}
\def\({\left(}
\def\){\right)}
\def\bM{\overline{\mathcal M}}
\def\MG{\mathcal M}
\newcommand{\smat}[4]{{\(\!\!\begin{array}{cc}{#1}\!&\!{#2}\\[-0.1cm]{#3}\!&\!{#4}\end{array}\!\!\)}} 
\def\id{\operatorname{id}}
\def\tr{\operatorname{tr}}
\def\a{{\alpha}}
\def\alphaj{\a_{\operatorname{mat}}}
\def\aqpol{{\operatorname{\alpha}_{\operatorname{pol}}}^{\bar q}}
\def\la{{\lambda}}
\def\sub{{\,\subseteq\,}}
\newcommand{\set}[1]{{\left\{#1\right\}}}
\newcommand\eq[1]{{(\ref{#1})}}
\newcommand{\card}[1]{{\left|{#1}\right|}}
\def\II{{I\!\!\,I}}
\def\N {{\mathbb {N}}}
\DeclareMathOperator{\mychar}{char} %
\long\def\forget#1\forgotten{}
\newcommand\suchthat{{\,:\ \,}}
\newcommand\comp[3][\bullet]{{{#1}_{{\if1#2{}\else{#2}\fi}{\if#3K{}\else{(#3)}\fi}}}} 
\newif\ifXY 
\ifXY \usepackage{xy}\fi %
\ifXY \xyoption{matrix}\xyoption{arrow}\xyoption{curve} \fi
\def\Zcd{{Zariski closed}}
\def\Zcr{{Zariski closure}}
\begin{document}

\title[Representability of affine algebras  over an arbitrary field  (Ver. \VER)] {Representability of affine
algebras  over  an arbitrary field }

\author{Alexei Belov-Kanel}
\author{Louis Rowen}
\author{Uzi Vishne}

\address{Department of Mathematics, Bar-Ilan University, Ramat-Gan
52900,Israel} %
\email{\{belova, rowen, vishne\}@math.biu.ac.il}

 \subjclass[2010]  {Primary:  16R10 Secondary: 16R30, 17A01, 17B01, 17C05}

\date{\today}


\keywords{Kemer's representability theorem, Specht's question,
hiking, Shirshov's Theorem, polynomial identities, T-ideal, affine
algebra, relatively free, representable}

\thanks{This research was supported  by the
Israel Science Foundation, (grant No. 1178/06).}

\begin{abstract}
In a series of papers culminating in \cite{BRV5}, summarized in
\cite{BRV6}, we used full quivers as tools in describing
PI-varieties of algebras and providing
 a complete proof of Belov's solution of
Specht's problem for affine algebras over an arbitrary Noetherian
ring.  In this paper, utilizing ideas from that work, we give a full
exposition of Belov's theorem~\cite{B2} that relatively free affine
PI-algebras over an arbitrary field are representable.
\end{abstract}

\maketitle
\tableofcontents
\newcommand\LL[2]{{\stackrel{\mbox{#1}}{\mbox{#2}}}}
\newcommand\LLL[3]{\stackrel{\stackrel{\mbox{#1}}{\mbox{#2}}}{\mbox{#3}}}
\newcommand\LLLL[4]
{\stackrel {\stackrel{\mbox{#1}}{\mbox{#2}}}
{\stackrel{\mbox{#3}}{{\mbox{#4}}}} }
\newcommand\AR[1]{{\begin{matrix}#1\end{matrix}}}

\section{Introduction}

 In this paper, utilizing ideas from \cite{BRV5}, we give a full
exposition of Belov's theorem~\cite{B2} that relatively free affine
PI-algebras over an arbitrary field are representable. As
in~\cite{BRV5}, the main tool in utilizing the combinatorics of
polynomials is ``hiking,'' which however is more complicated here
since it involves non-homogeneous polynomials, and is described
below in several stages.

 We work with algebras over a field $F$, with special emphasis to
the possibility that $F$ is finite. A~\textbf{(noncommutative)
polynomial} is an element of the free associative algebra~$F\{ x\}$
on countably many generators. A~\textbf{polynomial identity} (PI) of
an algebra $A$ over $F$ is a noncommutative polynomial  which
vanishes identically for any substitution in~$A$. We use \cite{BR}
as a general reference for~PIs. A~T-\textbf{ideal}  of $F\{ x\}$  is
an ideal $\CI$ of $F\{ x\}$ closed under all algebra endomorphisms
$F\{ x\}\to F\{ x\}$. We write $\id (A)$ for the T-ideal of PIs of
an algebra~$A$.

 Conversely, for any T-ideal $\CI$ of $F\{ x\}$, each element of
$\CI$ is a PI of the algebra  $F\{ x\}/\CI$, and $F\{
x\}/\CI$ is \textbf{relatively free}, in the sense that for
any PI-algebra $A$ with $\id(A) \supseteq \CI,$ and any $a_1,
a_2,\ldots\in A,$ there is a natural homomorphism $F\{ x\}/\CI
\to A$ sending $x_i \mapsto a_i$ for $i = 1,2, \dots .$


\subsection{Representability}$ $

 An
$F$-algebra $A$ is called {\bf representable} if it is embeddable as
an $F$-subalgebra of $\M[n](K)$ for a suitable field $K \supseteq
F$.
 Obviously any representable algebra is PI, but an easy counting argument
of Lewin~\cite{Lew} leads to the existence of non-representable
affine PI-algebras.

Nevertheless, the representability question of relatively free
affine algebras has considerable independent interest, and the
purpose of this paper is to give a full, self-contained proof of the
following result proved by Kemer over any infinite field, and to
elaborate Belov~\cite{B2} (over a finite field):

 \begin{thm}\label{4.66} Every relatively free affine PI-algebra over an arbitrary field is  representable. \end{thm}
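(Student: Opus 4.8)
The plan is to \emph{embed} the given algebra into a representable one; since every $F$-subalgebra of a representable algebra is again representable, this suffices. First I would put the algebra in normal form: a relatively free affine PI-algebra may be written as $\mathcal F = F\{x_1,\dots,x_\ell\}/\CI$, where $\CI = \id(\mathcal F)$ is the ideal of identities of $\mathcal F$ in the variables $x_1,\dots,x_\ell$, so that $\mathcal F$ is the rank-$\ell$ relatively free algebra of the affine PI-variety $\V(\mathcal F)$ that it generates. Being affine and PI, $\mathcal F$ satisfies the hypotheses of Shirshov's Height Theorem, by which there are finitely many words $h_1,\dots,h_t$ in $x_1,\dots,x_\ell$ and an integer $\beta$ with $\mathcal F$ spanned over $F$ by the products $u_0\,h_{i_1}^{m_1}\,u_1\cdots h_{i_\beta}^{m_\beta}\,u_\beta$ (the $u_j$ of bounded length); the same circle of ideas gives that $\mathcal F$ satisfies a Capelli identity, so a formal Cayley--Hamilton (trace) calculus is available.

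I would then form the \emph{characteristic closure} $\widehat{\mathcal F}$ of $\mathcal F$: using the formal Cayley--Hamilton calculus available for algebras satisfying a Capelli identity, one adjoins to $\mathcal F$, for each Shirshov word $h_i$, formal elements playing the role of the coefficients of its characteristic polynomial. By Cayley--Hamilton each $h_i$ then becomes integral, of bounded degree, over the commutative affine subring $C \subseteq \widehat{\mathcal F}$ generated by those coefficients, so the exponents $m_j$ in the Shirshov spanning set may be taken bounded; hence $\widehat{\mathcal F}$ is module-finite over the Noetherian commutative ring $C$, and is therefore representable. The structure theory of representable algebras through full quivers, developed in~\cite{BRV5}, guides both the choice of which characteristic coefficients to adjoin (and which Shirshov words come from the radical) and the matching of the full quiver of $\widehat{\mathcal F}$ with one carrying the T-ideal $\CI$.

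The crux --- and essentially the only serious obstacle --- is to show that the natural map $\mathcal F \to \widehat{\mathcal F}$ is \emph{injective}: adjoining the characteristic coefficients must introduce no new relations among the elements of $\mathcal F$, equivalently, every $f \notin \CI$ must remain nonzero in $\widehat{\mathcal F}$. Here I would run the \emph{hiking} procedure: starting from a substitution of Shirshov-type elements witnessing $f \notin \CI$ inside $\mathcal F$, one repeatedly multiplies $f$ by correction monomials and specializes, steering the resulting value into a prescribed Peirce component of $\widehat{\mathcal F}$ (equivalently, of the generic algebra of the relevant full quiver) while keeping it nonzero, until it can be read off directly in the representable hull. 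Over an infinite field the standard multihomogeneity and multilinearization reductions keep this bookkeeping homogeneous; the new difficulty here --- and the reason the present situation is harder than in~\cite{BRV5} --- is that over a finite field one cannot separate multihomogeneous components, so the hiking must be carried through with genuinely non-homogeneous polynomials, which is where the bulk of the technical work lies. Given the injectivity, $\mathcal F$ embeds in the representable algebra $\widehat{\mathcal F}$, proving \Tref{4.66}; over an infinite field the same scheme recovers Kemer's theorem.
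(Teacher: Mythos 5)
Your overall toolkit (Shirshov height, adjunction of characteristic coefficients, module-finiteness over a commutative Noetherian ring, Anan'in's theorem, hiking) is the right one, but the architecture you build with it has a genuine gap. You propose to form a ``characteristic closure'' $\widehat{\mathcal F}$ of the relatively free algebra $\mathcal F=F\{x\}/\CI$ itself and to reduce everything to injectivity of $\mathcal F\to\widehat{\mathcal F}$. Two problems. First, the characteristic coefficients you want to adjoin are not intrinsically defined on $\mathcal F$: in the paper they are coefficients of characteristic polynomials of actual matrices, i.e.\ they live in a fixed representation of a \emph{representable} algebra $A_0$ (Zariski-closed in $\M[n](K)$, with its Wedderburn decomposition, full quiver, dominant branch, and pure semisimple/radical substitutions). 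A formal Capelli/Cayley--Hamilton calculus does not by itself hand you a commutative affine ring $C$ of central operators on $\mathcal F$ over which $\widehat{\mathcal F}$ is module-finite and into which $\mathcal F$ embeds; constructing such an embedding is essentially the statement being proved, so your reduction is close to circular. Second, the injectivity step carries all the weight, and ``run the hiking procedure'' cannot discharge it as stated: hiking (Sections 4--5, Theorems~\ref{hikthm} and~\ref{hikthm7}) is defined relative to the quiver data of a representable $A_0$ and produces a \emph{critical docked nonidentity} in the T-ideal of a given polynomial; it is not a device for showing that a given element of $\mathcal F$ stays nonzero after trace adjunction to $\mathcal F$.

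The paper avoids your injectivity problem entirely by a Noetherian induction on T-ideals (Remark~\ref{Noethind}), which is where the solution of Specht's problem (Theorem~\ref{Spaff}) and Lewin's theorem \cite{Lew} enter --- two ingredients absent from your plan. One takes a maximal representable T-ideal $\CI_1\subseteq\CI$ (it exists by ACC, and is nonempty as a candidate set by Lewin), sets $A_0=F\{x\}/\CI_1$, and assumes for contradiction $\CI_1\subsetneq\CI$. Picking $f\in\CI\setminus\CI_1$, i.e.\ a nonidentity of the representable $A_0$, one hikes $f$ over $A_0$ to a critical, characteristic-coefficient-absorbing polynomial $\tilde f$; then the Characteristic Value Adjunction Theorem together with Shirshov's theorem and Anan'in's theorem (Lemma~\ref{Ahatisfinite}) shows the T-ideal of $\tilde f$ contains a nonzero T-ideal that is an ideal of the Noetherian representable algebra obtained from $A_0$ by adjoining (symmetrized) characteristic coefficients --- hence a representable T-ideal strictly larger than $\CI_1$ inside $\CI$, contradicting maximality. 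So the characteristic coefficients are adjoined to $A_0$, never to $\mathcal F$, and no embedding of $\mathcal F$ into a trace ring is ever needed. To repair your write-up you would need either to supply an intrinsic trace theory on $\mathcal F$ with a proof of the embedding (which is not available, and is harder than the theorem), or to adopt the paper's induction scheme, in which case the Specht ACC and Lewin base step must be added and the hiking machinery used for the purpose it was designed for.
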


Kemer obtained Theorem~\ref{4.66} over infinite fields by means of the following amazing results:

\begin{thm}[{\cite[Theorem~4.66]{BR}, \cite{Kem11}}]\label{fd1}$ $
\begin{enumerate}
\item Every affine PI-algebra over an infinite field (of arbitrary characteristic) is PI-equivalent  to a
finite dimensional (f.d.)~algebra.
\item Every   PI-algebra of characteristic 0 is PI-equivalent  to the Grassmann envelope of a
finite dimensional (f.d.)~algebra.
\end{enumerate}
\end{thm}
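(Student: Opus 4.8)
Both parts rest on the same engine, the \emph{Kemer index} of a T-ideal; part~(2) adds a reduction via Grassmann envelopes, so I treat the two separately.

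\medskip
\noindent\textbf{Part (1).} Given an affine PI-algebra $A$ over an infinite field $F$, the goal is to build a finite-dimensional $F$-algebra $B$ with $\id(B)=\id(A)$; note one cannot just embed $A$ in matrices, since there are non-representable affine PI-algebras, so the point is to match the \emph{identities}. The plan is to induct on the Kemer index of $\Gamma=\id(A)$. To $\Gamma$ one attaches a pair $\beta(\Gamma)=(\mu,s)$: in characteristic $0$ it is read off the asymptotics of the cocharacter sequence of $\Gamma$ and measures the largest number $\mu$ of pairwise disjoint alternating sets of a certain ``large'' size $t$ (governed by the semisimple part one is aiming for) together with the slack $s$ (governed by the nilpotency class of the radical). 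One then isolates the \emph{Kemer polynomials} of $\Gamma$ --- multilinear polynomials carrying $\mu$ disjoint alternating sets of size $t+1$ plus $s$ further ``small'' alternating sets, and not lying in $\Gamma$. From the data $\beta(\Gamma)$ one constructs a finite-dimensional $B$ with prescribed semisimple part and a nilpotent radical of the forced class, arranging $\id(B)\supseteq\Gamma$ and $\beta(\id(B))=\beta(\Gamma)$; the \emph{phoenix property} then shows that $\id(B)\supsetneq\Gamma$ would exhibit a Kemer polynomial of $\Gamma$ outside $\id(B)$, impossible once the indices agree, so $\id(B)=\Gamma$. For $F$ of positive characteristic the $S_n$-representation theory is replaced by combinatorial substitutes built around Shirshov's Height Theorem and Gelfand--Kirillov dimension, but the skeleton is unchanged.

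\medskip
\noindent\textbf{Part (2).} In characteristic $0$ the plan is to reduce to the superalgebra version of part~(1) via the Grassmann envelope. Write $E=E_0\oplus E_1$ for the infinite exterior algebra with its $\mathbb Z/2$-grading, and for a superalgebra $B=B_0\oplus B_1$ put $E(B)=(B_0\otimes E_0)\oplus(B_1\otimes E_1)$. One first shows (the ``Grassmann hull'' construction, which uses characteristic $0$) that the variety generated by an arbitrary PI-algebra coincides with that generated by $E(B)$ for some PI-superalgebra $B$; this turns the theorem into the statement that a prescribed super-T-ideal is $\id(E(B'))$ for a finite-dimensional superalgebra $B'$. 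Now one runs the part-(1) argument $\mathbb Z/2$-gradedly --- Kemer index of a super-T-ideal, super Kemer polynomials, construction of a finite-dimensional superalgebra with prescribed graded semisimple and radical data, graded phoenix property --- the point being that passing to $E$ enlarges the variety, which is exactly what lets the (non-affine) original problem be reduced to the affine super case. Assembling, $A$ is PI-equivalent to $E(B')$ with $B'$ finite-dimensional.

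\medskip
\noindent The main obstacle is the Kemer-index apparatus itself: defining $\beta$ so that it is an honest, attained invariant; proving it behaves correctly under the constructions used; building a finite-dimensional (super)algebra realizing a prescribed index \emph{exactly}; and establishing the phoenix property. In characteristic $0$ this leans heavily on the representation theory of the symmetric groups together with the structure theory of finite-dimensional algebras, and the argument is essentially inseparable from Kemer's solution of Specht's problem. It is precisely this reliance on characteristic-$0$ representation theory that is missing over a finite field, which is what forces the different, combinatorial (``hiking'') route of the present paper for \Tref{4.66}.
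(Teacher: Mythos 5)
First, note that the paper does not prove \Tref{fd1} at all: it is imported as a black box, with citations to \cite[Theorem~4.66]{BR} and \cite{Kem11}, and the body of the paper only uses its statement. So the only meaningful comparison is with Kemer's argument in those sources, and your proposal is a roadmap of exactly that argument (Kemer index, Kemer polynomials, phoenix property, finite-dimensional model realizing the index; for part (2) the Grassmann-envelope reduction to affine superalgebras and the super version of part (1)). As an identification of the right strategy it is accurate.

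As a proof, however, it has a genuine gap, and one concrete logical slip. The gap: every step that carries the weight --- that the index $\beta(\Gamma)$ is attained and well behaved, that one can build a finite-dimensional (super)algebra whose index and Kemer polynomials match those of $\Gamma$, and the phoenix property itself --- is named in your closing paragraph as ``the main obstacle'' rather than argued; so the proposal is a summary of \cite{Kem2}/\cite{BR}, not a derivation. The slip: in part (1) you conclude $\id(B)=\Gamma$ from $\id(B)\supseteq\Gamma$ together with $\beta(\id(B))=\beta(\Gamma)$ via the phoenix property. Matching the index does not force equality of T-ideals. Kemer's actual closing move is inductive: one adjoins to $\Gamma$ the T-ideal generated by its Kemer polynomials, obtaining a T-ideal of strictly smaller index which by induction equals $\id(B')$ for some finite-dimensional $B'$, and separately builds a finite-dimensional $A'$ with $\id(A')\supseteq\Gamma$ on which the Kemer polynomials of $\Gamma$ survive; then $\Gamma=\id(A'\oplus B')$, with the phoenix property used to verify this intersection. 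Your sketch announces induction on the index but never uses it, and without the direct-sum step the argument as stated does not close. Finally, for infinite fields of positive characteristic the remark that ``the skeleton is unchanged'' understates the issue: multilinear polynomials no longer generate T-ideals, the cocharacter definition of $\beta$ is unavailable, and \cite{Kem11} has to rework the machinery multihomogeneously --- which is precisely why the present paper, needing finite base fields as well, abandons this route for \Tref{4.66} and proceeds by Noetherian induction on T-ideals plus hiking instead.
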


In~\cite{BRV1}--\cite{BRV5} we have provided a complete proof for
the affine case of Specht's problem in arbitrary positive
characteristic. (The non-affine case has counterexamples, cf.~\cite{B0,B1}.) Together with Kemer's solution in characteristic 0,
this leads to:

\begin{thm}\label{Spaff}
Any affine PI-algebra over an arbitrary commutative Noetherian ring
satisfies the ACC on T-ideals.
\end{thm}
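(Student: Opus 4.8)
The plan is to reduce the assertion to the case of algebras over a field and then to invoke the two halves of the theory: Kemer's results (Theorem~\ref{fd1}) in characteristic $0$, and Belov's positive-characteristic solution \cite{BRV5}, whose technical core is the representability result of Theorem~\ref{4.66} proved in the present paper. Throughout, the ACC on T-ideals is understood relatively: given an affine PI-algebra $A$ over the commutative Noetherian ring $C$, one must show that the T-ideals of $C\{x\}$ containing $\id(A)$ satisfy the ascending chain condition, equivalently that each of them is finitely generated as a T-ideal over $C$.

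First I would reduce to $C$ a field. Being Noetherian, $C$ has nilpotent nilradical $N$, say $N^k=0$, and $C\{x\}$ carries the finite filtration $C\{x\}\supseteq N C\{x\}\supseteq\cdots\supseteq N^k C\{x\}=0$ whose successive quotients are $(C/N)\{x\}$-modules; a standard filtration (d\'evissage) argument then deduces the relative ACC over $C$ from the relative ACC over $C/N$ together with Noetherianness of the relevant modules. We may thus take $C$ reduced, hence a subdirect product of the finitely many Noetherian domains $C/P_i$, where $P_i$ ranges over the minimal primes of $C$; reducing to a single such $C/P_i$ and then, after controlling $C$-torsion, to its field of fractions $K_i$, the ACC over $C$ follows from the ACC over each field $K_i$ (of characteristic $0$ or a prime $p$).

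For the field case let $F$ be a field and $A$ an affine PI $F$-algebra. If $F$ is infinite, Theorem~\ref{fd1}(1) replaces $A$ by a PI-equivalent finite-dimensional $F$-algebra $B$, and the finite-basis property of finite-dimensional algebras — Kemer's theorem in characteristic $0$, and also known in positive characteristic by the same representability circle of ideas — gives the ACC for $\id(B)=\id(A)$, hence for all larger T-ideals. If $F$ is finite no such reduction to the finite-dimensional case is available, and one argues through Theorem~\ref{4.66}: every finitely generated relatively free algebra $\mathcal U_m=F\{x_1,\dots,x_m\}/\CI$ of the variety is representable, say $\mathcal U_m\hookrightarrow\M[n_m](K_m)$. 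By Shirshov-type arguments a suitable image of such a representable affine algebra is realized as a finite module over an affine commutative — hence Noetherian — ``trace-type'' ring $\hat C_m$, and the ``hiking'' technique developed in this paper — now applied to non-homogeneous polynomials — is what converts an ascending chain of T-ideals into a descending chain of Zariski closed subsets of $\operatorname{Spec}\hat C_m$, which stabilizes; letting $m\to\infty$ yields the ACC for $\CI$, and hence for every T-ideal over $F$.

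The main obstacle is exactly this last step for finite $F$: representability by itself does not deliver the finite-basis property (a representable affine algebra over a finite field may a priori be non-finitely-based, and enlarging the base field changes the identities), so the full hiking machinery must be run, and the source of the extra difficulty relative to \cite{BRV5} is precisely that the polynomials are no longer homogeneous. A secondary but genuinely necessary point is the verification, in the first reduction, that killing the nilradical, passing from a subdirect product to a single domain, and passing from a domain to its fraction field each preserve finite-basedness of T-ideals \emph{relative} to a given one — which forces one to work with the module-theoretic form of the Specht property, and to keep track of $C$-torsion, rather than with its bare form.
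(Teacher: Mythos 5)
There is a genuine problem here, and it is circularity rather than a fixable local gap. In this paper Theorem~\ref{Spaff} is not proved by any argument of the present kind: it is quoted as an external input, namely the solution of Specht's problem for affine algebras in positive characteristic given in \cite{BRV1}--\cite{BRV5} together with Kemer's solution in characteristic $0$, and it is then \emph{used} as a hypothesis in the Noetherian induction of Remark~\ref{Noethind} (the existence of a maximal representable T-ideal $\CI_1$, and the termination of the process, are both justified by Theorem~\ref{Spaff} and \cite{BRV5}). Your finite-field branch argues ``through Theorem~\ref{4.66}'' and through the hiking/Shirshov machinery of this paper; but Theorem~\ref{4.66} is exactly what this paper derives \emph{from} Theorem~\ref{Spaff}, so you are assuming the statement you set out to prove. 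The paper is explicit that its logical direction is the reverse of what you propose: Kemer went from representability to Specht over infinite fields, whereas here (following \cite{B2}) one starts from the already-established Specht property and deduces representability; the actual proof of Theorem~\ref{Spaff} in \cite{BRV5} proceeds by full quivers and hiking to carve out a T-ideal $\mathcal J$, modding out $\mathcal J$ to lower the quiver, and inducting --- it never needs representability of relatively free algebras over finite fields.

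Even setting the circularity aside, the step that is supposed to deliver ACC is not an argument: you concede that representability of the relatively free algebras $\mathcal U_m$ does not by itself give the finite-basis property, and then assert that ``hiking converts an ascending chain of T-ideals into a descending chain of Zariski closed subsets of $\operatorname{Spec}\hat C_m$, which stabilizes; letting $m\to\infty$ yields the ACC.'' No mechanism is given for either claim; T-ideals of a noncommutative relatively free algebra are not cut out by closed subsets of $\operatorname{Spec}$ of a commutative trace ring, and the passage $m\to\infty$ (or, in the affine setting, the uniformity in the number of variables) is precisely where the difficulty of Specht's problem lives. Your opening reduction from a Noetherian base ring to fields (nilradical d\'evissage, minimal primes, fraction fields with torsion control) is also only sketched and is not how \cite{BRV5} proceeds, but that is secondary: the essential defect is that the core of your proof invokes Theorem~\ref{4.66} and the present paper's machinery, both of which presuppose Theorem~\ref{Spaff}. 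The honest ``proof'' of Theorem~\ref{Spaff} in the context of this paper is a citation of \cite{BRV5} (plus Kemer in characteristic $0$), and any self-contained argument would have to reproduce the quiver-lowering induction of that paper rather than appeal to representability.
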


\begin{rem}\label{Noethind2} For graded associative algebras \cite{AB} and various nonassociative affine algebras of characteristic 0,
the finite basis of T-ideals has been established in the case when
the operator algebra is PI (Iltyakov~\cite{Ilt91,Ilt92} for
alternative and Lie algebras, and Vais and Zelmanov \cite{VZ} for
Jordan algebras) but the representability question remains open for
nonassociative algebras, so representability   presumably is more
difficult. The obstacle is getting started via some analog of
Lewin's theorem \cite{Lew}, which is not yet available.
Belov~\cite{B5} proved representability of alternative or Jordan
algebras satisfying all identities of some finite dimensional
algebra.
\end{rem}

\subsubsection{Plan of the proof of Theorem~\ref{4.66}}$ $

An immediate consequence of Theorem~\ref{fd1}(1) is that every
relatively free affine PI-algebra over an infinite field is
representable, since it can be constructed with generic elements
obtained by adjoining commutative indeterminates to the
f.d.~algebra.
  Kemer deduced from this the finite basis of T-ideals    (the solution of Specht's problem) over an infinite
field, in which he applied combinatorial techniques to representable
algebras.

The approach here for positive characteristic, following \cite{B2},
is the reverse, where one starts with the solution of Specht's
problem and applies Noetherian induction to prove representability
of affine relatively free PI-algebras over arbitrary fields
(including finite fields). (These methods also work in
characteristic 0, but rely on Kemer's solution of Specht's problem
in characteristic 0, which in turn relies on his representability
theorem in characteristic 0.)

\begin{rem}\label{Noethind}  We fix the following notation: We start
with the free affine algebra $F \{ x \} = F\{ x_1, \dots, x_\ell\}$
in $\ell$ indeterminates, and a T-ideal $\CI.$ This gives us the
relatively free algebra
$$A = F \{ x \}/\CI.$$  We say that the T-ideal $\CI$ is
\textbf{representable} if the affine algebra $A $ is  representable.

  The proof of Theorem~\ref{4.66} goes along
the following version of Noetherian induction:

We aim to show that every T-ideal $\CI$ is
   representable.
In view of Lewin's theorem \cite{Lew}, $\CI$ contains a
representable T-ideal $\CI_0$, so we assume that $A_0: = F \{ x
\}/\CI_0$ is representable. In view of Theorem~\ref{Spaff}, we have
a maximal representable T-ideal $\CI_1\supseteq \CI_0$ of $A$
contained in $\CI$, which we aim to show is~$\CI$. Assuming on the
contrary that $\CI_1 \subset \CI,$ we replace $\mathcal{I}_0$ by
$\mathcal{I}_1$ and $A_0$ by $A_0/\CI_1$. This reduces us to the
case where $A_0$ is representable but every nonzero T-ideal of~$A_0$
contained in $\CI$ is not representable. Our goal is to arrive at a
contradiction by finding a representable T-ideal $\CJ \subseteq \CI$
which strictly contains $\CI_1$. We will do this by taking some $f
\in \CI \setminus \CI_1$, i.e., $f \notin \id(A_0)$, and finding
$\CJ$ inside the T-ideal generated by $f$. (This process will
terminate because of the solution to Specht's problem given in
\cite{BRV5}. For this reason, we do not need to introduce parameters
of the induction.)
\end{rem}

The rest of this paper consists of the proof of Theorem~\ref{4.66}
by means of Remark~\ref{Noethind}. The proof  relies on the ideas of
the proof of Kemer's representability theorem given in
\cite{BR,BKR}. Much of this paper is devoted to elaborating the
theory of \cite{BRV2} and \cite{BRV3}, as described in
\cite{BRV5,BRV6}, and there is a considerable overlap with
\cite{B2}.

The proof of Theorem~\ref{Spaff} in \cite{BRV5} is somewhat
different from Kemer's proof. In~\cite{BRV2} we considered the {\bf
full quiver} of a representation of an associative algebra over a
field, and determined properties of full quivers by means of a close
examination of the structure of Zariski closed algebras, studied in
\cite{BRV1}. Then we modified $f$ by means of a ``hiking procedure''
in order to force $f$ to have certain combinatorial properties, and
used this to carve out a T-ideal $\mathcal J$ from inside a given
T-ideal; modding out $\mathcal J$ lowers the quiver in some sense,
and then one obtains Specht's conjecture by induction.

Our approach here is similar, but with some variation. Here we need
not mod out by $\mathcal J$, but need $\mathcal J$ to be
representable. We start the same way, but one of the key steps
fails, and we need a way of getting around it. In both instances,
our techniques rely on the theory of full quivers of a f.d.~algebra
over a field, to be recalled, followed by adjunction of
characteristic coordinates.

We  introduce ``critical'' polynomials (Definition~\ref{doccr})
which enable us to calculate characteristic coefficients using the
combinatoric properties of polynomials, leading to our main tool:

\begin{thm}[Canonization Theorem for Polynomials]\label{hikthm0} Suppose $f(x_1, \dots, x_t) $ is a nonidentity of $A_0$ whose nonzero
evaluation passes through all the blocks of the quiver, via the
dominant branch. Then the T-ideal of $f$ contains a critical
nonidentity.
\end{thm}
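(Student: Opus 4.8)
The plan is to reduce the Canonization Theorem to a sequence of local modifications of $f$, each performed on the polynomial's evaluation along a fixed path through the quiver of a f.d.~algebra PI-equivalent to $A_0$. First I would fix a f.d.~representative $\bar A$ of $\id(A_0)$ (using Theorem~\ref{fd1}(1) when $F$ is infinite, and the Specht-theoretic machinery of \cite{BRV5} together with the full-quiver structure theory when $F$ is finite), and a nonzero evaluation of $f$ whose value is supported on a path visiting every block of the full quiver via the dominant branch, as in the hypothesis. The goal is to ``hike'' $f$: to pre- and post-multiply by suitable monomials and substitute specializations so that the resulting polynomial in the $T$-ideal of $f$ becomes \emph{critical} in the sense of Definition~\ref{doccr}, i.e.~its characteristic coefficients along the path are computable from the combinatorial (multilinear, graded) data of the polynomial itself.

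The key steps, in order, would be: (1) Multilinearize and reduce to the homogeneous components that actually see the dominant branch, discarding summands whose evaluation lies in a lower layer of the quiver; this is where non-homogeneity forces extra care compared to \cite{BRV5}, since one must keep track of several degrees simultaneously rather than a single one. (2) Insert ``frame'' variables at the block boundaries so that the evaluation factors as a product of block-diagonal pieces joined by the arrows of the dominant branch, realizing $f$'s value as a product of elementary matrix units times scalar contributions from each block. (3) Within each diagonal block, use the standard hiking lemmas (Capelli-type or alternating substitutions) to force the block contribution to record a full system of characteristic coefficients — here one adjoins the characteristic coordinates alluded to in the text, replacing generic matrix entries by new commuting indeterminates so that traces and other characteristic values of the block become visible as coefficients. (4) Assemble these block modifications into a single polynomial $g$ in the $T$-ideal of $f$ and verify that $g$ satisfies all the clauses of the definition of a critical polynomial, and that $g$ is still a nonidentity of $A_0$ because its distinguished evaluation is nonzero by construction.

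The main obstacle I expect is step (3) in positive characteristic over a finite field: the usual device for extracting characteristic coefficients — alternation over sufficiently many variables, or Zubrilin/Razmyslov trace identities — is most transparent in characteristic $0$ or over an infinite field, and one must instead work with the Zariski-closed / full-quiver description from \cite{BRV1,BRV2} to guarantee that the characteristic coordinates one adjoins are genuinely independent and that the combinatorial bookkeeping of degrees survives specialization. A secondary difficulty is ensuring that the hiking does not inadvertently push the evaluation off the dominant branch into a lower block, which would destroy the nonidentity property; controlling this requires a careful choice of the frame monomials in step (2) so that the ``gluing'' arrows of the dominant branch are hit with nonzero scalar and nothing cancels. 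The remainder of the argument is a matter of combining the per-block estimates and checking Definition~\ref{doccr} clause by clause, which is routine once the independence of the adjoined coordinates is in hand.
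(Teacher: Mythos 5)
There is a genuine gap, and it is at the heart of the matter. Your step (1) begins by multilinearizing $f$, but the whole difficulty this paper addresses is the finite-field (characteristic $p$) case, where T-ideals are \emph{not} generated by their multilinear elements and multilinearization can turn a nonidentity into an identity (e.g.\ $x^2$ linearizes to $xy+yx$, which may vanish identically even when $x^2$ does not); the Boolean identity $x^2-x$ and Shchigolev's T-space example are cited in the paper precisely as warnings that such reductions fail. The paper instead uses Kemer's quasi-linearization (Proposition~\ref{quasi}), which stays inside the T-ideal of $f$, keeps the degree in each variable a $p$-power, and permits reducing to \emph{pure} semisimple/radical substitutions without ever making $f$ multilinear. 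Starting from a multilinear reduction, the rest of your outline has no way to recover the lost nonhomogeneous information. (Also note that in the paper's setup $A_0$ is already representable and Zariski closed by the Noetherian induction of Remark~\ref{Noethind}; there is no appeal to a finite-dimensional PI-equivalent algebra over a finite field, since no such theorem is available there.)

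A second, more structural problem is that your steps (2)--(3) do not supply the actual mechanism by which ``wrong'' evaluations are killed, and they import material that belongs \emph{after} canonization. In the paper, criticality (Definition~\ref{doccr}) only asks for a docked polynomial --- products framed by copies of the central/Capelli polynomial $h_{\tilde n}$ --- all of whose nonzero $y$-substitutions are right; no characteristic coefficients appear in the Canonization Theorem or its proof. The elimination of wrong substitutions is done by concrete hiking substitutions: preliminary hiking to neutralize Frobenius gluing and wrong base-field sizes, first-stage hiking replacing the bridge variable by $c_{n_{i_1}}z_{i_1}[x_{i_1},h_{\tilde n-1}]z_{i_1+1}c_{n_{i_1+1}}$ (the commutator with a central polynomial annihilates low-degree semisimple evaluations while preserving radical bridges), and second-stage hiking using differences such as $f_{z_i\mapsto h_{\tilde n}(y')^{d_i}z_i}-f_{z'_{u+1}\mapsto z'_{u+1}h_{n_u}(y')^{t_i}}$ so that wrong-position semisimple evaluations cancel in pairs; Lemmas~\ref{expans15}--\ref{expans2}, resting on the maximality properties of the dominant branch, then pin the degree-$\tilde n$ substitutions into the right positions, and a final substitution inserts the docks. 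Your proposal to ``adjoin characteristic coordinates, replacing generic matrix entries by new commuting indeterminates'' inside step (3) leaves the free algebra altogether, so the resulting object is no longer in the T-ideal of $f$; in the paper the characteristic-coefficient absorption and the adjunction of characteristic values (Sections~6--8, Lemmas~\ref{q1}--\ref{q2}, Lemma~\ref{Ahatisfinite}) are separate later steps applied to the already-critical polynomial, not part of the canonization argument.
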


The proof of the Canonization Theorem for Polynomials is based on
applying hiking to obtain more and more complicated polynomials
while preserving the two ``Kemer invariants''  of the polynomial
described in \cite{BR,BKR}, which underly the computational study of
T-ideals. The basic operations of hiking, namely multiplying by a
Capelli polynomial or replacing a radical element by a commutator
element of the same form preserves the hypotheses of
\cite[Lemma~6.7.3]{BKR}, so we can measure the dimension of the
semisimple part and the nilpotence index of the radical in terms of
the Kemer  invariants.

The Canonization Theorem for Polynomials will enable us to replace
multiplication by characteristic coefficients in the Shirshov
extension, with multiplication by elements of $A_0$.

\section{Preliminaries}

Let us review some of the techniques we need for the proof.  The
reader can refer to \cite{BRV6} for further details of all of this
material.

\subsection{Linearization and quasi-linearization}$ $

The well-known linearization process of a polynomial can be
described in two stages: First, writing a polynomial $f(x_1, \dots,
x_n)$ as
$$f(0,x_2, \dots, x_n)+ (f(x_1, \dots, x_n)- f(0,x_2, \dots, x_n)),$$ one sees by
iteration that any T-ideal is additively spanned by T-ideals of
 polynomials for which each indeterminate appearing nontrivially
appears in each of its monomials,
 cf.~\cite[Exercise~2.3.7]{Row1}. Then we define the \textbf{linearization process}
 by
 introducing a new indeterminate $x_i'$ and
 passing to $$f(x_1, \dots, x_i +x_i', \dots, x_m) - f(x_1, \dots, x_i , \dots,
 x_m)-f(x_1, \dots, x_i', \dots, x_m).$$
This process, applied repeatedly, yields a multilinear polynomial in
the same T-ideal. In characteristic 0 the linearization process can
be reversed by taking $x_i' = x_i,$ implying that every T-ideal is
generated by multilinear polynomials. But this fails in positive
characteristic, as exemplified by the Boolean identity $x^2 -x$, so
we need an alternative. To handle characteristic $p>0$, Kemer
~\cite{Kem2} took a closer look, which we review from~\cite{BRV6}.

\begin{defn}\label{QL}
A function $f$ is $i$-\textbf{quasi-linear} on $A$ if
$$f(\dots, a_i + a_i', \dots) = f(\dots, a_i , \dots)+f(\dots,
a_i', \dots)$$ for all $a_i, a_i' \in A; $ $f$ is
$A$-\textbf{quasi-linear} if $f$ is $i$-quasi-linear on $A$ for all
$i$.   When $A$ is understood, we just say
\textbf{quasi-linear}.\end{defn}

Suppose $f(x_1, x_2, \dots) \in F\{ x\}$ has degree $d_i$ in $x_i$.
The \textbf{$i$-partial linearization} of~$f $ is
\begin{equation}\label{partlin}\Delta_i f := f(x_1, x_2, \dots, x_{i,1}+\cdots+ x_{i,d_i}, \dots)- \sum
_{j=1}^{d_i} f(x_1, x_2, \dots, x_{i,j}, \dots)\end{equation} where
the substitutions were made in the $i$ component, and
$x_{i,1},\dots,x_{i,d_i}$ are new variables.

 When $\Delta_if (A)= 0$,
then $f$ is $i$-quasi-linear on $A$, so we apply \eqref{partlin} at
most $\deg_i f$ times repeatedly, if necessary, to each $x_i$ in
turn, to obtain a nonzero polynomial that is $A$-quasi-linear in the
T-ideal of $f$.

\begin{prop}[{\cite[Corollary~2.13]{BRV4}}]\label{quasi}
Assume $\mychar F = p > 0$. For any polynomial~$f$ which is not an
identity of $A_0$, the T-ideal generated by $f$ contains a
quasi-linear non-identity  for which the degree in each
indeterminate is a $p$-power.
\end{prop}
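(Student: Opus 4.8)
The plan is to transform $f$, by operations that keep us inside the T-ideal $\langle f\rangle$ generated by $f$ and keep us at a nonidentity of $A_0$, driving $f$ first into quasi-linear form and then into one whose degree in every indeterminate is a power of $p$. First I would apply the initial stage of the linearization process — repeated substitution $x_i\mapsto 0$ — to pass to a nonidentity of $A_0$ inside $\langle f\rangle$ in which every indeterminate that occurs occurs in every monomial; this is allowed because $\langle f\rangle$ is additively spanned by the T-ideals of the resulting summands, so one summand is still a nonidentity. Then I would quasi-linearize: as recalled just before the statement, whenever $f$ fails to be $i$-quasi-linear on $A_0$ the polynomial $\Delta_i f$ of \eqref{partlin} is again a nonidentity of $A_0$ lying in $\langle f\rangle$, and in it each new indeterminate replacing $x_i$ has degree at most $\deg_i f-1$ while no other indeterminate's degree goes up. Measuring a polynomial by the sequence of degrees of its indeterminates sorted in decreasing order, and ordering such sequences lexicographically, every application of a nontrivial $\Delta_i$ strictly lowers the measure; so after finitely many steps we reach a quasi-linear nonidentity of $A_0$ inside $\langle f\rangle$.

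It remains to arrange that the degree in each indeterminate is a power of $p$. Over an infinite field this is straightforward: homogenize $f$ in each variable (a Vandermonde argument, legitimate since the field is infinite), and, if $\deg_i f=d$ is not a $p$-power, read off the base-$p$ digits of $d$ to write $d=e_1+\cdots+e_r$, where the $e_m$ are the $p$-power blocks contributed by the digits. Since no carries arise in this addition, Kummer's theorem gives $\binom{d}{e_1,\dots,e_r}\not\equiv 0\pmod p$, and $r\ge 2$ forces every $e_m<d$. Substituting $x_i\mapsto x_i^{(1)}+\cdots+x_i^{(r)}$ and extracting the multihomogeneous component in which $x_i^{(m)}$ has degree exactly $e_m$ yields a polynomial $h\in\langle f\rangle$ whose re-identification $x_i^{(m)}\mapsto x_i$ equals $\binom{d}{e_1,\dots,e_r}f$; thus $h$ is a nonidentity of $A_0$, and in it the degree $d$ has been replaced by the strictly smaller $p$-powers $e_1,\dots,e_r$. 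Re-quasi-linearizing in $x_i$ after each such step and reusing the well-founded measure above, the process terminates at a quasi-linear nonidentity of $A_0$ inside $\langle f\rangle$ all of whose indeterminate-degrees are powers of $p$.

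The step I expect to be the main obstacle is the extraction of a single multihomogeneous component, which is unavailable over a finite field since a T-ideal there need not be closed under that operation — and indeed $x^{q+p-1}$ is already a quasi-linear nonidentity of $\F_q$ whose degree $q+p-1$ is not a $p$-power, showing that quasi-linearization by itself cannot deliver $p$-power degrees. The way around this, following \cite{BRV4}, is to keep $f$ quasi-linear throughout and to carry out the degree reduction using only the operators $\Delta_i$ and substitutions into $A_0$, which visibly remain inside $\langle f\rangle$: for a quasi-linear polynomial the homogeneous constituents entering the binomial-coefficient computation above can be reached this way, so the genuine difficulty becomes the combinatorial bookkeeping that the interleaved quasi-linearization and degree-lowering steps terminate — which is handled by a well-founded measure of the kind used above.
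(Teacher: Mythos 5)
Your first half --- splitting off the constant part so that every indeterminate occurs in every monomial, and then driving $f$ to a quasi-linear nonidentity of $A_0$ by repeated use of the operators $\Delta_i$ of \eqref{partlin} with a well-founded degree measure --- is exactly the argument the paper sketches in the paragraphs preceding the statement. But the paper does not prove the $p$-power assertion at all: it imports the whole proposition from \cite[Corollary~2.13]{BRV4}, and that is precisely where your proposal has a genuine gap. Your mechanism for forcing $p$-power degrees (split $x_i$ into $x_i^{(1)}+\cdots+x_i^{(r)}$ according to the base-$p$ digits of $d=\deg_i f$, extract the multihomogeneous component of multidegree $(e_1,\dots,e_r)$, and use the nonvanishing of $\binom{d}{e_1,\dots,e_r}$ mod $p$) is correct over an infinite field, where homogeneous components of members of a T-ideal again lie in the T-ideal by Vandermonde. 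Over a finite field --- the case this paper actually needs, since $A_0$ may be defined over $\F_q$ --- that extraction is not available, as you yourself note with the example $x^{q+p-1}$.

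The fallback you offer does not close the gap. You assert that for a quasi-linear $f$ the needed multihomogeneous constituents ``can be reached'' using only the operators $\Delta_i$ and substitutions into $A_0$. But once $f$ is quasi-linear on $A_0$, every $\Delta_i$-type difference is by definition an identity of $A_0$, so these operators can only produce identities and cannot yield the nonidentity components on which your Kummer-coefficient step is run; and substituting elements of $A_0$ leaves the free algebra entirely, so it cannot produce a new element of the T-ideal either. Even in your own example one needs a different device (e.g., using that the T-ideal of $x^{q+p-1}$ over $\F_q$ contains $x^{pq}$, whose degree is a $p$-power and which is still additive on $\F_q$), and in general the finite-field reduction rests on the structure of additive (quasi-linear) polynomial maps in characteristic $p$ and the Frobenius, which is the actual content of \cite{BRV4}. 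So the proposal proves the statement only for infinite $F$, reproduces the paper's quasi-linearization sketch for the rest, and leaves the finite-field $p$-power step --- the part the citation is there to supply --- unproved.
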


\subsection{Full quivers}$ $

In this subsection $A_0$ is a 
representable affine algebra over a field $F$,   i.e., $A_0 \subset M_n(K)$ with $K$
finite or algebraically closed, and we fix this particular
representation. The closure of $A_0$ in $M_n(K)$ with respect to the
Zariski topology \cite[\S~3.1]{BRV6} is PI-equivalent to $A_0$, so
we assume throughout that $A_0$ is Zariski closed. In particular,
when $F$ is infinite then we may assume $F =K$,
cf.~\cite[Remark~3.1]{BRV6}. By Wedderburn's Principal Theorem
\cite[Theorem 2.5.37]{Row1.5}, $A_0 = S \oplus J$ as vector spaces,
where $J$ is the radical of $A_0$ and $S \cong A_0/J$ is a
semisimple subalgebra of $A_0$. Thus $S$ is a direct product of
matrix algebras $R_1 \times \dots \times R_k$, which we want to view
along the diagonal of $M_n(K)$, although perhaps with identification
of coordinates, which are to be described graphically.  By the
Braun-Kemer-Razmyslov theorem, cf.~\cite{Br},   $J$ is nilpotent, so we
take $t = t_{A_0}$ maximal such that $J^t \ne 0.$

We need an explicit description, but which may distinguish Morita
equivalent algebras since matrix algebras of different size are not
PI-equivalent. The {\bf full quiver} of $A_0$ is a directed graph
$\Gamma$, having neither loops, double edges, nor cycles, with the
following information attached to the vertices and edges:

The vertices of the full quiver of $A_0$ correspond to the diagonal
matrix blocks arising in the semisimple part $S$, whereas the arrows
come from the radical~$J$. Every vertex likewise corresponds to a
central idempotent in a corresponding matrix block of $M_n(K)$. 

\begin{itemize}

\item The vertices are ordered, say from
$\bf 1$ to $\bf k$, and an edge always takes a vertex to a vertex of
higher order. There are identifications of vertices, called
\textbf{diagonal gluing}, and identification of edges, called
\textbf{off-diagonal gluing}. Gluing of vertices in full quivers is
identical or \textbf{Frobenius}, as in $\set{\smat{\alpha}{0}{0}{\alpha^q} \suchthat \alpha \in K}$ where $\card{F} = q$ and $K = \overline{F}$.
\item Each vertex is labeled
with a roman numeral ($I$, $\II$ etc.); glued vertices are labeled
with the same roman numeral. A
vertex can be either \textbf{filled} or \textbf{empty}. 

The first vertex listed in a glued matrix block is also given a pair
of subscripts --- the \textbf{matrix degree} $n_{\bf i}$ and the
\textbf{cardinality} of the corresponding field extension of $F$
(which, when finite, is denoted as a power $q^{t_{\bf i}}$ of $q
=\card{F}$).
\item When the base field $F$ is finite, superscripts  indicate the
\textbf{Frobenius twist} between glued vertices, induced by the
Frobenius automorphism $a \mapsto a^q;$ this could identify $a^{q_1}
$ with $a^{q_2}$ for powers $q_1,q_2$ of $q$ (or equivalently $a$
with $a^{q_2/q_1}$ when $q_1 < q_2$); we call this
\textbf{$(q_1,q_2)$-Frobenius gluing}.

\item Off-diagonal gluing (i.e., gluing among the edges) includes Frobenius gluing (which only
exists in nonzero characteristic) and \textbf{proportional gluing}
with an accompanying \textbf{scaling factor}~$\nu$.
\end{itemize}
 Examples are given   in \cite{BRV3}. Now we take some non-identity of $A_0$, say $f(x_1,
\dots, x_m) = \sum g_j (x_1, \dots, x_m)\in \CI$ for
monomials $g_j$. An easy technical condition: Since the full quiver
$\Gamma$ of $A_0$ could be replaced by the subquiver corresponding
to the algebra generated by evaluations of all polynomials in
$\CI$, and then $f$ could be replaced by a sum of polynomials
in~$\CI$, we may assume that $\Gamma$ passes through all
blocks.

The numbers $\dim _F A_0$ and  $t$ are crucial to the description of
quivers, so we want these numbers to be reflected in the polynomial
$f$. This is achieved by means of  Kemer's First
Lemma~(\cite[Proposition~6.5.2]{BKR}) and  Kemer's Second
Lemma~(\cite[Proposition~6.6.31]{BKR}). On the other hand, we need
$f$ to be \textbf{full}
  on the f.d.~algebra $A_0$ in the sense that
    some nonzero
evaluation of $f$  passes through all the blocks of the quiver, via the
dominant branch~$\mathcal B$.     This is
achieved by means of Lemma \cite[Proposition~6.7.3]{BKR}, called the
\textbf{Phoenix property}. Applying these results to $f$ after
hiking (to be described below), we assume throughout that $f$ is
full, and that the conclusion of Kemer's First Lemma and Kemer's
Second Lemma hold.

In view of Proposition~\ref{quasi} we may assume that $f$ is
quasi-linear. When specializing~$x_i$ to~$A_0$, we write the
substitutions $\bar x_i$ as sums of radical and  semisimple
elements; since $f(x_1, \dots, x_m)$ is quasi-linear, we reduce the
substitutions in $S+J$ to their component parts in $S  \cup J$; we
call these substitutions \textbf{pure}. Thus $f$  has a nonzero
specialization where all substitutions $\bar x_i$ are pure. We fix
this specialization and the notation $\bar x_1, \dots, \bar x_m.$
Any other specialization is denoted $\bar x_i'$.

Any pure semisimple substitution $\bar x_i$ is  in $S$ and thus in a
block (or in glued blocks) of some degree $n_i$, which we also call
the \textbf{degree} of $\bar x_i$. A~radical substitution $\bar x_i$
is somewhat more subtle. It is viewed as an edge connecting two
vertices in blocks, say of degrees $n_{i_1}$ and $n_{i_2} $. If
these blocks are not glued, then we call this substitution a
\textbf{bridge} of \textbf{degrees} $n_{i_1}$ and $n_{i_2} $. A
bridge is \textbf{proper} if $n_{i_1} \ne n_{i_2} $. A~proper bridge connecting vertices of degree $n_i \ne n_j$ is an
$\tilde n$-\textbf{bridge} if $n_i$ or $n_j$ is $\tilde n$. But there also
is the possibility that a radical substitution connects two glued
blocks of the same degree $\tilde n$, in which case we call it
$\tilde n$-\textbf{internal}.

%
%
%
%
%

\subsubsection{Review of the three  canonization theorems for quivers}$ $

Since arbitrary gluing is difficult to describe, we need some
``canonization'' theorems to ``improve'' the gluing. The first
theorem shows that we have already specified enough kinds of gluing.

\begin{thm}[{First Canonization Theorem, cf.~\cite[Theorem~6.12]{BRV2}}]\label{main} The \Zcr\ of any representable affine PI-algebra $A_0$ has a representation for
whose full quiver all gluing is proportional Frobenius.
\end{thm}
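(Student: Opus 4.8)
The plan is to work directly with the Zariski closed algebra $A_0 \subset \M[n](K)$ and modify the embedding block by block so that every instance of gluing — both among vertices (diagonal gluing) and among edges (off-diagonal gluing) — becomes proportional Frobenius. Since $A_0$ is Zariski closed, its structure is rigid: by Wedderburn's Principal Theorem $A_0 = S \oplus J$, the semisimple part $S$ is a product of matrix blocks $R_1 \times \cdots \times R_k$ over finite extensions of $F$, and $J$ carries the arrow data. The key point is that the \Zcr\ construction allows us to conjugate the representation and, where necessary, replace a coordinate by a generic one, without changing the PI-class; so ``having a representation with property $P$'' is really a statement that we may choose coordinates making $P$ hold.

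First I would handle the vertices. Each glued pair of diagonal blocks corresponds to a copy of a field $L \supseteq F$ embedded diagonally, possibly via a field homomorphism $L \to L$; over $F$ any such map is (a power of) the Frobenius $a \mapsto a^q$ (trivial in characteristic $0$, where there is nothing to do), so diagonal gluing is either identical or Frobenius already by the setup. The substantive work is off-diagonal: an edge (a matrix unit position inside some $\Hom(R_i,R_j)$ piece of $J$) glued to another edge is governed by a scaling factor $\nu \in K$ together with a possible Frobenius twist. I would show that by rescaling the idempotents and the intertwining maps realizing the blocks — essentially choosing a good basis adapted to the quiver — one can absorb any ``non-proportional'' part of the gluing into a genuine proportionality constant plus a Frobenius twist. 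Concretely, one examines the minimal polynomial relations imposed by the edges among themselves in the Zariski closed algebra (these come from the fact that $J$ is a bimodule over $S$ and $J^t \ne 0$, $J^{t+1}=0$), and one argues that any linear relation tying two edge-coordinates together must, after conjugation, be of the stated shape; an arbitrary affine or higher-degree algebraic relation between edge entries would force extra identities and hence would already be visible as additional gluing of the allowed kinds, or would collapse two blocks.

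I would organize the argument as an induction on the number of blocks $k$ (or on $\dim_F A_0$), peeling off the top block in the quiver order: an edge always goes from a lower-ordered vertex to a higher one, so the top vertex is a ``sink'' and the edges into it can be analyzed and normalized first, then removed, reducing to a smaller quiver to which the inductive hypothesis applies. At each stage the local model is a single glued block together with its incoming edges, and one checks by a direct computation (conjugating by a block-diagonal matrix and, in the finite-field case, tracking the Frobenius powers $q^{t_{\bf i}}$) that the gluing can be put in proportional Frobenius form; the cited \cite[Theorem~6.12]{BRV2} provides exactly this local normalization.

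The main obstacle, as I see it, is the bookkeeping in positive characteristic: one must simultaneously control the proportionality scalars $\nu$ and the Frobenius twists $(q_1,q_2)$, and verify that conjugations chosen to fix up one glued block do not destroy the normal form already achieved at blocks below it. This compatibility — that the block-diagonal conjugations used at different vertices commute with, or can be chosen to respect, the edge identifications already in place — is where care is needed; it is handled by doing the conjugations in the quiver order (top down) so that fixing a sink never disturbs edges among earlier vertices, and by noting that the residual freedom at each vertex (scaling its idempotent) is precisely enough to realize the proportionality factor without introducing new non-Frobenius twists. Once this ordering discipline is in place, the theorem follows.
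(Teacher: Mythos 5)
This theorem is not proved in the present paper at all: it is recalled verbatim from \cite[Theorem~6.12]{BRV2}, so there is no internal proof to compare against, and your sketch has to be judged on its own. Judged that way, it has a genuine gap at exactly the point where the real work lies. You write that ``any linear relation tying two edge-coordinates together must, after conjugation, be of the stated shape,'' and that higher-degree relations would either be visible as allowed gluing or collapse blocks; but this is precisely the assertion to be proved, not a step one may assume. The structure theory of Zariski closed algebras over a finite field (the input from \cite{BRV1} on which \cite{BRV2} relies) says that the coordinate relations defining such an algebra are additive, i.e.\ given by $q$-polynomial maps of the form $x\mapsto\sum_i \alpha_i x^{q^i}$, possibly with several Frobenius-twisted summands and possibly tying together more than two edge coordinates at once. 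A relation with two or more twisted summands is \emph{not} proportional Frobenius, and no amount of block-diagonal conjugation or rescaling of idempotents will turn it into one: conjugation acts on such a relation by twisting the scalars, not by deleting summands. The content of the First Canonization Theorem is that one may pass to a \emph{different} (in general larger, re-based) representation of the Zariski closure in which every gluing relation involves a single Frobenius twist together with a proportionality scalar; your argument stays inside one fixed representation up to conjugacy, which is too weak to reach the conclusion.

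The organizational devices you propose (induction on the number of blocks, normalizing sinks first, checking that later conjugations do not disturb earlier normalizations) address bookkeeping that is not the obstruction, while the actual obstruction --- decomposing or separating multi-term Frobenius-additive gluing by changing the representation --- is not addressed. Likewise, the claim that diagonal gluing ``is either identical or Frobenius already by the setup'' is true only because that fact is itself part of the structure theorem for Zariski closed algebras being invoked; in a from-scratch proof it, too, has to come out of the analysis of the defining relations rather than out of the quiver formalism. So the proposal is not a proof of Theorem~\ref{main}; to repair it you would need to bring in the $q$-linearity of relations in Zariski closed algebras and the change-of-representation argument of \cite[Theorem~6.12]{BRV2}.
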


For the Second Canonization Theorem we grade paths according to the
following rule:

\begin{defn} When $|F| = q < \infty,$ we write $\MG_\infty$ for the multiplicative monoid $\set{1,q,q^2,\dots,\epsilon}$,
where $\epsilon a = \epsilon$ for every $a \in \MG_{\infty}$. (In
other words, $\epsilon$ is the zero element adjoined to the
multiplicative monoid $\langle q \rangle$.)   Let $\bM$ be the
semigroup $\MG_\infty /\!\!\sim$
 where $\sim$ is the equivalence relation obtained by
matching the degrees of glued variables: When two vertices have a
$(q_1,q_2)$-Frobenius twist, we identify $1$ with  $q^k =
\frac{q_1}{q_2}$ in the respective matrix blocks, and use $\bM$ to
grade the paths.
\end{defn}

\begin{defn}
A~full quiver is \textbf{basic} if it has a unique initial vertex
$r$ and unique terminal vertex $s$, and all of its gluing above the
diagonal is proportional Frobenius. A~basic full quiver  $\Gamma$ is
\textbf{canonical} if
any two paths from the vertex $r$ to the vertex $s$ have the same
grade.
\end{defn}

(Our notion of basic quiver has nothing to do with the notion of
basic algebra in representation theory.)

\begin{thm}[{Second Canonization Theorem, cf.~\cite[Theorem~3.7]{BRV3}}]\label{Can2}
Any relatively free algebra is a subdirect product of algebras whose
full quivers are basic.

Any basic full quiver $\Gamma$ of a representable relatively free
algebra can be modified (via a change of base) to a canonical full
quiver  of an isomorphic algebra (i.e., relatively free algebra of
the same variety).
\end{thm}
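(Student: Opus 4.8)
The plan is to prove the two assertions of the Second Canonization Theorem separately, since they are of different natures: the first is a decomposition statement, the second a normalization-by-change-of-base statement.

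\textbf{Step 1: Subdirect decomposition into basic quivers.} First I would recall that a relatively free algebra $A = F\{x\}/\CI$ is representable precisely in the setting we are arguing toward, but more to the point, it always embeds (after passing to the Zariski closure of one representation, which is PI-equivalent and hence has the same full quiver data up to the identifications we care about) into a product of the ``block'' algebras attached to the connected pieces of paths between extreme vertices. Concretely, for each pair $(r,s)$ consisting of an initial vertex $r$ and a terminal vertex $s$ of $\Gamma$, consider the quotient representation obtained by restricting attention to the subquiver $\Gamma_{r,s}$ of all vertices and edges lying on some path from $r$ to $s$; the corresponding quotient algebra $A_{r,s}$ has a full quiver with a unique source and a unique sink. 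Applying the First Canonization Theorem (Theorem~\ref{main}) to each $A_{r,s}$, we may assume all gluing above the diagonal in $\Gamma_{r,s}$ is proportional Frobenius, so $\Gamma_{r,s}$ is basic. The intersection of the kernels of the projections $A \to A_{r,s}$ is zero because every evaluation of an element of $A$ is supported on the blocks traversed by some path of $\Gamma$, and every such path extends to a path from some initial vertex to some terminal vertex. Hence $A$ is a subdirect product of the $A_{r,s}$, which proves the first assertion.

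\textbf{Step 2: Normalizing a basic quiver to a canonical one.} Now fix a basic full quiver $\Gamma$ of a representable relatively free algebra, with unique source $r$ and unique sink $s$. The grade of a path from $r$ to $s$ is an element of $\bM$, computed multiplicatively from the Frobenius twists along its edges together with the diagonal and off-diagonal proportional-Frobenius gluing data. The obstruction to being canonical is that two $r$-to-$s$ paths $P_1, P_2$ may have distinct grades $m_1 \ne m_2$ in $\bM$. The idea is to perform a \emph{change of base} — a conjugation of the representation by a diagonal matrix whose entries on the blocks are suitable powers of a transcendental (or, over a finite field, a suitable Frobenius adjustment) — which rescales the scaling factors $\nu$ on the edges. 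Such a conjugation does not change the isomorphism class of the algebra (it gives an isomorphic, hence PI-equivalent, relatively free algebra of the same variety), but it does change the proportional gluing constants. The key computation is that the grades of all $r$-to-$s$ paths can be simultaneously matched: choose the diagonal rescaling on block $\bf i$ to absorb the discrepancy accumulated along any fixed spanning tree of paths from $r$, so that each non-tree edge, once rescaled, contributes exactly the grade dictated by the tree path to its head. Because $\Gamma$ has no cycles, this is a consistent system, and the resulting quiver is canonical.

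\textbf{Main obstacle.} I expect the hard part to be Step 2, and specifically the bookkeeping of \emph{Frobenius} gluing when $F$ is finite: over an infinite field one simply rescales by diagonal matrices with independent transcendental entries and the path-grade matching is a linear-algebra triviality, but over a finite field the available changes of base are limited (one cannot adjoin new transcendentals without leaving the relatively free algebra of the \emph{same} variety in the naive way), so one must argue that the Frobenius twist data along different paths is already compatible modulo the equivalence $\sim$ defining $\bM$ — i.e., that after collapsing degrees of glued variables, the only genuine freedom is in the proportional scaling factors, which \emph{can} be adjusted. Verifying that the ``$\epsilon$'' (zero) element of $\bM$ behaves correctly, so that paths factoring through nilpotent identifications do not create spurious obstructions, and checking that the rescaling respects the roman-numeral labeling and the filled/empty distinction, are the technical points that require care; the rest follows the pattern of \cite[Theorem~3.7]{BRV3}.
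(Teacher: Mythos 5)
This statement is not proved in the present paper at all: it is recalled verbatim from \cite[Theorem~3.7]{BRV3}, so your proposal can only be measured against the argument there. Your Step 1 is in the spirit of that source (decompose according to pairs of initial/terminal vertices, pass to the convex subquiver of paths between them, and invoke the First Canonization Theorem for the gluing condition), and is acceptable in outline.

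The genuine gap is in Step 2, and it is twofold. First, you are adjusting the wrong datum: the grade of a path is an element of $\bM$, built from the Frobenius twists (powers of $q$) along the path, whereas conjugation by a block-scalar diagonal matrix commutes with the diagonal blocks and therefore only rescales the proportional gluing factors $\nu$ on the edges; it cannot convert a $(q_1,q_2)$-Frobenius twist into anything else, hence cannot change any grade. Second, even for the data such a conjugation does affect, rescaling block $\bf i$ by $\lambda_{\bf i}$ multiplies an edge ${\bf i}\to{\bf j}$ by $\lambda_{\bf i}\lambda_{\bf j}^{-1}$, so every path from $r$ to $s$ is multiplied by the same telescoping factor $\lambda_r\lambda_s^{-1}$. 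Consequently a vertex-potential (spanning-tree) argument can never repair a discrepancy between two parallel $r$--$s$ paths: once the potentials are fixed along the tree, the value on a non-tree edge is forced, and whether it matches is exactly the holonomy around the undirected cycle formed by the two parallel paths, which is invariant under any such change of base. Your appeal to ``$\Gamma$ has no cycles'' concerns directed cycles, which is beside the point; a basic quiver typically has parallel paths, and those are precisely where canonicality can fail. A telling symptom is that your Step 2 nowhere uses the hypothesis that the algebra is relatively free, although the conclusion is an isomorphism onto a relatively free algebra of the same variety: in \cite{BRV3} the ``change of base'' exploits relative freeness (one is allowed to change the representation and the generic generators, e.g.\ by block-wise Frobenius adjustments), not merely an inner automorphism of a fixed representation. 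As written, Step 2 does not establish the second assertion.
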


In view of this result, we may reduce to the case that the full
quiver of our polynomial~$f$ is basic.

 The Third Canonization
Theorem \cite[Theorem~3.12]{BRV3} describes what happens when one
mods out a ``nice'' T-ideal, so is not relevant, since all we need
is to find a representable T-ideal, which we do later by another
method.

\section{The Canonization Theorem for Polynomials}$ $

We have two languages: quivers and their representations on one hand, versus the
combinatorial language of identities on the other hand. First we consider the
geometrical aspect. A \textbf{branch} is a path  $\mathcal B$ in the
quiver.
 The
 {\bf length} of $\mathcal B$ is its number of arrows,
excluding loops, which equals its number of vertices (say $k$) minus $1$.
Thus, a typical branch has vertices of various matrix degree $n_j$,
$j = 1, 2, \dots,k$. We call $(n_1, \dots, n_k)$ the \textbf{degree
vector} \cite[Definition~2.32]{BRV5} of the branch $\mathcal B$. The
\textbf{descending degree vector} is obtained by ordering the entries
of the degree vector to put them in descending order
lexicographically (according to the largest $n_j$ which appears in
the distinct glued matrix blocks, excluding repetitions, taking the
multiplicity into account in the case of Frobenius gluing). We write
the descending degree vector as $(\pi(\bold n)_1, \dots, \pi(\bold n)_k)$. Thus,
$\pi(\bold n)_1= \max\set{n_1,\dots,n_k}$.
If $\mathcal B$ appears in a
nonzero specialization of a monomial of~$f$, we call $\mathcal B$ a
\textbf{branch of} $f$.

We denote the largest $n_j$ appearing in the quiver as $\tilde n$.

\begin{defn}\label{def21}
A  branch  $\mathcal B$ is  \textbf{dominant} if it has the maximal
possible number of $\tilde n$-bridges, has maximal possible length
$k$ with regard to this property, has the maximal possible number of
vertices of $\tilde n$-bridges among these in the lexicographic
order, 
and then we continue down
the line to $\tilde n -1$ etc. The {\textbf{depth}} of a dominant
branch $\mathcal B$ is the number of times $\tilde{n}$ appears in
its degree vector.
%
\end{defn}

Our goal is somehow to force  every  nonzero evaluation of $f$ into
a dominant branch by considering each degree from $\tilde n$ down in
turn. Throughout, $c_m$ here denotes the Capelli polynomial in
$2m^2$ indeterminates (denoted $c_{2m^2}$ in \cite{BR}), which is
alternating in $m$ indeterminates and an identity of $M_{m-1}(F)$
for any field $F$; $h_{m,i}(y)$ denotes a multilinear central
polynomial $h_{m,i}(y_{i_1}, \dots, y_{i_m'})$ for $\M[m](F)$, in
specific  indeterminates $y_{i_1}, \dots, y_{i_m}$  which are all distinct. %
Evaluating  $h_ {m,i} $ on semisimple matrices of degree $<m$ is~$0$.
We put
$$h _m= h_{m,1} h_{m,2}\cdots h_{m,t+1},$$  the product of $t+1$ copies of distinct Capelli polynomials of the same degree $2m^2$, and call the $h_{m,j}$ the respective \textbf{components} of~$h$. We focus first on semisimple substitutions having matrix degree
$\tilde n,$ and put $h = h_{\tilde n}.$

\begin{lem}\label{expans10} Any nonzero specialization of $h$ has a component of solely semisimple substitutions (all of the same
degree).
\end{lem}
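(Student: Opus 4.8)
The plan is to combine the multilinearity of $h$, the nilpotence bound $J^{t+1}=0$, and the fact noted just above that each component $h_{\tilde n,j}$ annihilates semisimple substitutions of matrix degree $<\tilde n$. Since each $h_{\tilde n,j}$, and hence $h$, is multilinear (the $t+1$ components involve disjoint sets of variables), arguing exactly as with the pure substitutions of $f$ --- writing each substitution as $s+r$ with $s\in S$, $r\in J$, and splitting the semisimple part across blocks that are not glued --- I would first reduce to a \emph{pure} specialization: every substitution into $h$ is either radical, or semisimple of a well-defined matrix degree (supported on a single block, or on a set of glued blocks of equal degree). All substitutions lie in $A_0$; write $v_j\in A_0$ for the value of the $j$-th component under the chosen specialization, so the value of $h$ is $v_1v_2\cdots v_{t+1}\ne 0$, whence each $v_j\ne 0$.

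Next comes the pigeonhole. If some slot of $h_{\tilde n,j}$ receives a radical substitution $r\in J$, then $v_j\in J$: indeed $h_{\tilde n,j}$ is multilinear, so each monomial of $v_j$ has this $r$ as a factor and its remaining factors in $A_0$, and $J$ is a two-sided ideal of $A_0$. Were this to happen for every $j=1,\dots,t+1$, we would get $v_1\cdots v_{t+1}\in J^{t+1}=0$ (recall $t$ was chosen maximal with $J^t\ne 0$), contradicting that the specialization of $h$ is nonzero. Hence some component $h_{\tilde n,j_0}$ receives only semisimple substitutions.

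It remains to pin down the common degree. For this $j_0$ all substitutions lie in $S=R_1\times\cdots\times R_k$, so $v_{j_0}\in S$, and since $v_{j_0}\ne 0$ its component $v_{j_0}^{(i)}$ in some block $R_i$ is nonzero. The projection $S\to R_i$ is an algebra homomorphism, so $v_{j_0}^{(i)}$ is $h_{\tilde n,j_0}$ evaluated on the $R_i$-components of the substitutions, each a semisimple element of matrix degree $n_i$. As $h_{\tilde n,j_0}$ vanishes on semisimple substitutions of degree $<\tilde n$ and $\tilde n$ is the largest matrix degree in the quiver, $n_i=\tilde n$. Finally, by multilinearity, if the $R_i$-component of some substitution were $0$ then $v_{j_0}^{(i)}$ would vanish; so every substitution into $h_{\tilde n,j_0}$ has a nonzero component in the degree-$\tilde n$ block $R_i$, and, being semisimple and supported on a single block or on glued blocks of a common degree, has degree exactly $\tilde n$. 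Thus $h_{\tilde n,j_0}$ is a component all of whose substitutions are semisimple of the (common) degree $\tilde n$, as claimed.

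The step I expect to require the most care is the bookkeeping around gluing: making sure that after reducing to a pure specialization the matrix degree of a semisimple substitution is genuinely well-defined --- this is exactly where one uses that glued vertices carry equal matrix degree --- and that this reduction is valid in the hiking context in which $h$ is being multiplied into $f$. The combinatorial core, matching the $t+1$ components against the nilpotence index $t$ through $J^{t+1}=0$, is then immediate.
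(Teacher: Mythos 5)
Your proof is correct and follows essentially the same route as the paper's: the paper's entire argument is your pigeonhole step, namely that if every component $h_{\tilde n,j}$ received a radical substitution then the value would be a product of $t+1$ radical elements, hence $0$ since $J^{t+1}=0$. Your additional bookkeeping (reduction to pure substitutions via multilinearity and the identification of the common degree as $\tilde n$ using the vanishing of $h_{\tilde n,j}$ on lower-degree semisimple elements) is sound and simply makes explicit what the paper leaves implicit.
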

\begin{proof} Otherwise every component has a radical substitution,
so we have a product of $t+1$ radical elements, which is 0 by
definition of $t$. \end{proof}

Viewing a substitution of $x_i$ as corresponding to an edge in the quiver,
we have two degrees, one for each vertex.

\begin{defn} An \textbf{$m$-right} substitution of $x_i$ is one having
degree including $m$. An \textbf{$m$-wrong} substitution
$\overline{x_i}$ of $x_i$ is one both of whose  degrees differ from
$m $, where $m$ appears as a degree  of the substitution $\overline{x_i}$.

 We write \textbf{right} (resp.~ \textbf{wrong})
for $\tilde n$-right (resp.~$\tilde n$-wrong).
\end{defn}

In view of Lemma~\ref{expans10}, a wrong substitution would lead to
$h$ having a component of semisimple substitutions in a matrix block
of the wrong degree.

One delicate point:  An internal radical bridge say from one matrix
block of degree $m$ to a different  matrix block of degree $m$ is
technically ``right'' according to this definition, but must be
dealt with.
%
%
%

\begin{rem} Suppose $f(x_1, \dots, x_\ell) $ is a full nonidentity of $A_0$ whose nonzero
evaluation passes through all the blocks of the quiver, via the
dominant branch~$\mathcal B$ say of degrees $m_1, \dots, m_k$ having
some number $k$ of bridges, and $k'$ internal radical substitutions.
By Theorem~\ref{Can2}, any wrong nonzero substitution may be assumed
to have $k$ bridges since otherwise we apply induction  to the
number of semisimple components in the full quiver. On the other
hand, we can take the nonzero evaluation with $k'$ maximal, so then
any wrong substitution has at most $k'$ internal radical
substitutions.
\end{rem}

We work with a dominant branch $\mathcal B$ of $\Gamma$ in $f$. Our
objective is to modify $f$ to a nonidentity containing a Capelli
component which enables us to use combinatorial methods to calculate
characteristic coefficients in a Shirshov extension, with
multiplication by elements of $A_0$. Here is one of our main
  results, enabling us to correspond quivers with properties of
  polynomials, and which leads directly to the representability theorem.

\begin{thm}[Canonization Theorem for Polynomials]\label{hikthm} Suppose $f(x_1, \dots, x_\ell) $ is a full nonidentity of $A_0$.
Then the T-ideal of $f$ contains   a critical
nonidentity.
\end{thm}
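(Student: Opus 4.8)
The plan is to reduce Theorem~\ref{hikthm} to Theorem~\ref{hikthm0} by first forcing every nonzero evaluation of $f$ into the dominant branch $\mathcal B$, and then invoking the hiking machinery already set up for polynomials whose evaluations all pass through the dominant branch. Since $f$ is assumed full, some nonzero evaluation already passes through all the blocks via $\mathcal B$; what remains is to eliminate the \emph{wrong} evaluations --- those specializations whose nonzero value passes through a branch other than a dominant one. The strategy here is an induction that peels off degrees from $\tilde n$ downward, exactly as foreshadowed in Definition~\ref{def21} and the two remarks preceding the statement.

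First I would handle the degree $\tilde n$. Using Proposition~\ref{quasi} we may assume $f$ is quasi-linear with all degrees $p$-powers, and by the Second Canonization Theorem (Theorem~\ref{Can2}) we may assume the full quiver of $f$ is basic and even canonical. Now multiply $f$ by the polynomial $h = h_{\tilde n}$ introduced above, in fresh indeterminates, placing the factors $h_{\tilde n,1},\dots,h_{\tilde n,t+1}$ between consecutive segments of $f$ along $\mathcal B$. By Lemma~\ref{expans10}, any nonzero specialization of $h$ forces at least one component $h_{\tilde n,j}$ to be a product of solely semisimple substitutions all of the same matrix degree $\tilde n$; since $h_{\tilde n,j}$ is a central polynomial for $\M[\tilde n](F)$ that vanishes on semisimple matrices of smaller degree, this pins those substitutions to genuine $\tilde n$-blocks. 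If a wrong substitution occurred, it would produce a central evaluation in a block of degree $<\tilde n$, killing $h$; so after multiplying by $h$, every nonzero evaluation of $f h$ has its semisimple $\tilde n$-part located correctly, and (using the two Remarks) the number of $\tilde n$-bridges and of $\tilde n$-internal substitutions is the maximal possible, i.e.\ the evaluation runs along a branch matching $\mathcal B$ in degree $\tilde n$. Crucially, multiplying by a product of Capelli-type polynomials of fixed degree is one of the permitted hiking operations, so by \cite[Lemma~6.7.3]{BKR} the two Kemer invariants of $f$ are preserved and $f h$ is still a nonidentity of $A_0$ (its evaluation along $\mathcal B$ survives because $h_{\tilde n,j}$ has a nonzero central value on the $\tilde n$-block of $S$); in particular $fh \notin \CI_1$.

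Then I would iterate: having fixed the behavior at degree $\tilde n$, repeat the construction with $h_{\tilde n-1}$ to pin the degree-$(\tilde n-1)$ part, being careful with internal radical bridges between two distinct blocks of the same degree (the ``delicate point'' noted above) --- these are handled by the maximality clause on $k'$ and by the canonical structure of the quiver, which keeps the grades of competing paths equal. After finitely many steps (the number of distinct matrix degrees in $\Gamma$ is finite) we arrive at a nonidentity $\tilde f$ in the T-ideal of $f$, still full, all of whose nonzero evaluations pass through the dominant branch $\mathcal B$. At this point $\tilde f$ satisfies exactly the hypothesis of Theorem~\ref{hikthm0}, so the T-ideal of $\tilde f$ --- hence that of $f$ --- contains a critical nonidentity, which is the desired conclusion.

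The main obstacle, and where the real content lies, is the bookkeeping that guarantees the iteration does not destroy fullness or the Kemer invariants, and that the successive multiplications by $h_{\tilde n}, h_{\tilde n - 1}, \dots$ do not interfere with one another. Concretely: after inserting $h_{\tilde n}$ we must check that the dominant-branch evaluation of the enlarged polynomial still realizes the same pair of Kemer invariants (so that \cite[Lemma~6.7.3]{BKR}, the Phoenix property, still applies and lets us re-fullify), and that no \emph{new} wrong branches of higher or equal complexity are created by the fresh indeterminates of $h$. Managing the $\tilde n$-internal radical bridges --- which are ``right'' by the letter of the definition but still need to be collapsed so that $\mathcal B$ is genuinely forced --- is the most technical point; the canonicity from Theorem~\ref{Can2} (equality of path grades from $r$ to $s$) is exactly the tool that makes this collapse possible. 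Once these invariance checks are in place, the reduction to Theorem~\ref{hikthm0} is immediate.
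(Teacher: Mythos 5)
There is a genuine gap, and it is structural: your argument is circular. In this paper Theorem~\ref{hikthm0} is not a previously established result that can be invoked --- it is the very theorem being proved, restated; indeed ``full'' is \emph{defined} to mean that some nonzero evaluation of $f$ passes through all the blocks of the quiver via the dominant branch, so the hypotheses of Theorem~\ref{hikthm0} and Theorem~\ref{hikthm} coincide verbatim, and the paper's proof of both is precisely the hiking machinery of Sections~4--5 (culminating in Theorem~\ref{hikthm7} and the final dock substitution). Writing ``at this point $\tilde f$ satisfies the hypothesis of Theorem~\ref{hikthm0}, so its T-ideal contains a critical nonidentity'' therefore assumes exactly what has to be proved; even under your stronger reading of the hypothesis (\emph{all} nonzero evaluations along the dominant branch), no such statement is proved anywhere prior, so there is nothing to reduce to.

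The substantive part you do sketch --- externally multiplying $f$ by $h=h_{\tilde n}$ (and then by $h_{\tilde n-1}$, etc.) and appealing to Lemma~\ref{expans10} --- does not do the work that the paper's hiking does. Multiplying on the outside, or ``between consecutive segments of $f$,'' only constrains the evaluation at the inserted factors; it does not kill a wrong \emph{internal} substitution of an original variable $x_i$ (a semisimple substitution in a block of too-small degree, a wrong bridge, or an $\tilde n$-internal radical edge), since such an evaluation can coexist with a correct evaluation of the appended $h$. That is why the paper hikes by \emph{substituting into the variables of $f$}: the preliminary stage to neutralize $(q_1,q_2)$-Frobenius gluing and wrong base-field sizes, the first stage replacing $x_{i_1}$ by $c_{n_{i_1}}z_{i_1}[x_{i_1},h_{\tilde n-1}]z_{i_1+1}c_{n_{i_1+1}}$, and the second-stage cancellation differences of Proposition~\ref{expans0} (e.g. $f_{z_i\mapsto h_{\tilde n}(y')^{d_i}z_i}-f_{z'_{u+1}\mapsto z'_{u+1}h_{n_u}(y')^{t_i}}$), which together yield Lemmas~\ref{expans1} and~\ref{expans2} forcing all substitutions to be right. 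Your proposal also never produces the docked form required by Definition~\ref{doccr} --- the copies of $h_{\tilde n}(y)$ attached to molecules via the docking indeterminates, obtained in the paper by the substitution $z_u\mapsto c_{\tilde n}(y')^{t_u}z_u$, $z'_{u+1}\mapsto z'_{u+1}c_{n_u}(y')^{t_1}$ --- so even granting your branch-forcing step, ``critical'' is not reached. In short, the missing content is the hiking-by-substitution and docking procedure itself, which cannot be replaced by external multiplication plus an appeal to Theorem~\ref{hikthm0}.
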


\section{The proof of
the Canonization Theorem for Polynomials}$ $

 The proof of
the Canonization Theorem for Polynomials is
done in several stages:

\begin{enumerate}
\item Eliminate unwanted semisimple substitutions.
\item Make sure that that the substitutions are in the ``correct''
semisimple components.
\item Provide a molecule inside the polynomial where we
can compute the action of characteristic coefficients.
 \end{enumerate}

\subsection{Unmixed case}\label{nonmixed}$ $

First, following Kemer, we dispose of the following easy case.
 We say a
substitution is \textbf{unmixed} if it does not involve bridges,
i.e., all substitutions are in a single Peirce component. Here we
need only multiply by a Capelli polynomial of the matrix degree, and
then proceed directly to the method of \S\ref{trab}.

This aspect is crucial to our proof, since substitutions alone are
not sufficient to take care of examples such as the non-finitely
generated T-space of \cite{Shch00} (generated by
$\{[x_1,x_2]x_1^{p^k-1}x_2^{p^k-1}, \ k \in \N\}$ in the Grassmann
algebra with two generators; also see~\cite{G,G2}).

%

\subsection{The mixed case: the hiking procedure}$ $

To complete the proof of
the Canonization Theorem for Polynomials,
we must turn to the mixed case.
The main feature in the proof of the Canonization Theorem for
Polynomials is hiking. The notion of hiking passes from branches of
quivers to combinatorics of nonidentities, showing how to modify a
non-identity of a T-ideal $\CI$ to  another non-identity in
$\CI$ whose algebraic operations leave us in the same quiver.

 In our combinatorics we need to cope with the danger that our
 substitutions are  wrong,  or the base field of the semisimple component
 is of the
wrong size. To prevent this, we make  substitutions of multilinear
polynomials for indeterminates inside $f$, called \textbf{hiking},
which force the evaluations to become 0 in such situations. In other
words, hiking replaces $f$ by a more complicated polynomial in its
T-ideal,
 which yields a  zero valuation when we start with a wrong substitution
 in the original indeterminates of~$f$.

We have three kinds of variables: \begin{itemize} \item Core
variables, used for exclusive absorption inside the radical (such as
variables which appear in commutators with central polynomial),\item
variables used for hiking, \item  variables inside Capelli
polynomials used for computing the actions of characteristic
coefficients.
 \end{itemize}

\begin{exmpl} An easy example of the underlying principle:
If $k = 2$ with $n_1> n_2$, then the quiver $\Gamma$ consists of two
blocks and an arrow connecting them, so we  replace a variable $y$
of $f$ with a radical substitution by $h_{n_1,1}[h_{n_1,2},z] y
h_{n_2}$. The corresponding specialization remains in the radical.
Then we are ready to utilize the techniques given below in
\S\ref{trab} to compute characteristic coefficients, bypassing the
complications of hiking.
\end{exmpl}

Suppose we have the polynomial $f$, with a radical evaluation. We
replace it and have a hiked polynomial. If $\hat g$ belongs to the
T-ideal generated by $g$, then one of the variables in $h$ must have
a radical evaluation. After making the substitution we get a new
polynomial $g'$ of the same form as $g$. This is like the Phoenix
property described in characteristic 0. But in general we need a
rather intricate analysis.


\begin{defn}\label{mol} Given a polynomial $f(x_1, \dots,x_\ell)$  and another polynomial $g$, we write $f_{x_i \mapsto g}$ to
denote that $g$ is substituted for $x_i$. We say that $f$ is
\textbf{hiked} to $\tilde f : = f_{x_i \mapsto g}$ at $x_i$ if $g$
is linear in $x_i$.

We call the replacement $g$ of $x_i$ a \textbf{molecule} of the
hiked polynomial. A \textbf{complex molecule} is the product of
molecules.
 \end{defn}

 \begin{defn}\label{doccr} A polynomial is \textbf{docked} (of \textbf{length} $d$)
if it can be written in the form
 $$\sum _u g_{u,1}h_{\tilde n}(y) g_{u,2} h_{\tilde n}(y)\cdots g_{u,d} h_{\tilde n}(y)g_{u,d+1}$$
 for suitable polynomials $g_{u,i}$ (perhaps constant) in which the
 $y$ indeterminates do not occur. (In other words the
 $y$ indeterminates  occur only in the $h_{\tilde n}(y)$.) Unfortunately, if the $x_i$ repeat
 then the molecules repeat, and thus the variables $y$ repeat.

 A docked polynomial $f(x_1, \dots, x_t; y,y',y''; z,z')$ is \textbf{critical} if any nonzero substitution
of the $y_i$ is right.
\end{defn}
Thus the docks are attached to molecules. If $f$ is hiked to various
polynomials $f_j$ we also say it can be hiked to $\sum f_j$.

(Likewise for other indeterminates that appear once the hiking is
initiated.)


\begin{rem} First suppose that the depth $u = k,$ i.e., all $n_j = \tilde n$,
and there are no nonzero external radical substitutions. In other
words, the only nonzero substitutions involve specializing all
the $x_i$ to semisimple elements in blocks of degree $\tilde n$.
Then we simply replace $f$ by $hf$, which trivially is docked, and
the theorem is proved. So in the continuation, we assume that $u<k$,
which means there is some nonzero substitution $f(\overline{x_1},
\dots, \overline{x_k})$ in our dominant branch $\mathcal B$, for
which some $\overline{x_i}$ is an $\tilde n$-bridge. We fix
$\overline{x_1}, \dots, \overline{x_k}$ in what follows, and call it
our \textbf{fundamental substitution}, with bridge at this $i$.

\end{rem}

%
%

We prove a more technical version of the Canonization Theorem, to handle the mixed case.

\begin{thm}[Hiking Theorem for Polynomials]\label{hikthm7} Suppose $f(x_1, \dots, x_\ell) $ is a  full nonidentity of $A_0$, possibly with mixed or pure
substitutions. Then   $f$ can be hiked to   a critical nonidentity
in which all of the substitutions of the $x_i$ are right.
\end{thm}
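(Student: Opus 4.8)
The plan is to prove Theorem~\ref{hikthm7} by induction working down the degree scale from $\tilde n$, using hiking to kill the unwanted substitutions one degree-level at a time. First I would set up the base of the induction using the Remark preceding the statement: if the depth $u = k$, there is nothing to do since $hf$ is already docked and all substitutions are semisimple of degree $\tilde n$, hence trivially right. So assume $u < k$ and fix the fundamental substitution $\overline{x_1}, \dots, \overline{x_k}$ with an $\tilde n$-bridge at some index $i$. The strategy is then: (1) multiply $f$ by the Capelli-type product $h = h_{\tilde n}$ in fresh $y$-variables placed appropriately along the dominant branch, so that by Lemma~\ref{expans10} every nonzero evaluation of the hiked polynomial forces a component of $h$ to be evaluated solely on semisimple elements of degree $\tilde n$; this blocks wrong semisimple substitutions of degree $\tilde n$. (2) Deal with $\tilde n$-internal bridges separately, since by the delicate point noted after the definition of right/wrong they are ``right'' but still problematic: here one uses the trick illustrated in the Example, replacing a radical-substituted variable $y$ by a molecule of the shape $h_{\tilde n,1}[h_{\tilde n,2},z]\,y\,h_{\tilde n}$, so a commutator of central-polynomial values sits inside the radical, forcing the evaluation to vanish unless the two endpoints are genuinely the same block. (3) Having handled degree $\tilde n$, pass to the subquiver obtained by contracting the degree-$\tilde n$ structure and repeat with $\tilde n - 1$, and so on down the line, exactly paralleling the ordering in Definition~\ref{def21} of a dominant branch.

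The key technical input I would lean on is the preservation of the two Kemer invariants under the basic hiking operations, as asserted in the paragraph following Theorem~\ref{hikthm0}: multiplying by a Capelli polynomial of the right matrix degree, or replacing a radical element by a commutator element of the same form, preserves the hypotheses of \cite[Lemma~6.7.3]{BKR} (the Phoenix property). This is what guarantees that after hiking the polynomial is \emph{still a nonidentity of $A_0$} — the ``Phoenix'' rises again — and still full, so the inductive hypothesis applies to it. I would also invoke Theorem~\ref{Can2} (reduction to the basic, indeed canonical, case) and the Remark after it, so that any wrong nonzero substitution can be assumed to have the same number $k$ of bridges and at most $k'$ internal radical substitutions as the fundamental one; this bounds the combinatorial complexity and makes the downward induction terminate. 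Finally Proposition~\ref{quasi} lets me assume $f$ is quasi-linear with each degree a $p$-power, so that pure substitutions (each $\overline{x_i}$ in $S \cup J$ rather than $S+J$) suffice and the molecule substitutions interact cleanly with the branch structure.

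The main obstacle, and the place where the argument genuinely differs from the characteristic-$0$ Phoenix situation, is step (2) together with the bookkeeping that after hiking at degree $\tilde n$ one has not destroyed the dominant branch at lower degrees. Concretely: when I insert $h_{\tilde n}(y)$ components to trap degree-$\tilde n$ semisimple pieces, the $y$-variables repeat whenever the $x_i$ repeat (noted explicitly in Definition~\ref{doccr}), and a repeated $y$-variable could be forced by one occurrence to be semisimple of degree $\tilde n$ and by another occurrence to live somewhere incompatible, collapsing the whole evaluation to zero including on the good substitution. Managing this requires choosing the hiking positions so that each $h_{\tilde n}(y)$ block sits at a place along $\mathcal B$ where the fundamental substitution genuinely passes through a degree-$\tilde n$ vertex, and verifying that the fundamental substitution itself survives — i.e.\ the hiked polynomial is still a nonidentity — which is exactly where \cite[Lemma~6.7.3]{BKR} and the invariance of the Kemer invariants are doing the heavy lifting. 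The internal-bridge case compounds this because the commutator trick introduces yet another new variable $z$ that must be arranged not to clash. I would treat degree $\tilde n$ in full detail, isolate the internal-bridge subtlety as a separate lemma, and then assert the downward induction, since the lower-degree steps are formally identical once the subquiver and branch have been correctly truncated.
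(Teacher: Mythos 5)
Your overall architecture does track the paper's: the base case when the depth $u=k$ (just multiply by $h$), the fixed fundamental substitution with an $\tilde n$-bridge, molecules built from Capelli/central polynomials and commutators, quasi-linearity via Proposition~\ref{quasi}, preservation of non-identity via the Phoenix property, and a downward iteration from $\tilde n$. But there is a genuine gap at the heart of the argument. Your step (1) claims that inserting $h=h_{\tilde n}$ and invoking Lemma~\ref{expans10} ``blocks wrong semisimple substitutions of degree $\tilde n$.'' It does not: Lemma~\ref{expans10} only guarantees that some component of $h$ is evaluated purely semisimply in a single degree, whereas ``wrong'' is a statement about \emph{which block} of the quiver the substitution occupies, not about its degree, and mere multiplication by $h_{\tilde n}$ cannot prevent the original variables (or the lower-degree central polynomials inserted further along the branch) from being swallowed by a larger, wrong component --- exactly the failure mode in the paper's second-stage example, where $c_4[h_{1,1},z_1]z_2$ can be evaluated inside the degree-$2$ block. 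The device the paper uses to kill these, and which your proposal lacks, is the subtraction/cancellation hiking of Proposition~\ref{expans0}, e.g.\ $f_{z_i \mapsto h_{\tilde n}(y')^{d_i} z_i} - f_{z'_{u+1} \mapsto z'_{u+1} h_{n_u}(y')^{t_i}}$ as in \eqref{OK}: on a wrong (central, semisimple) evaluation the two summands coincide and cancel, while on the intended radical evaluation exactly one summand survives. Without some such cancellation mechanism, plain insertion of Capelli/central polynomials does not yield the stated conclusion that all substitutions of the $x_i$ are right.

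Two further omissions matter. First, the rightness conclusion rests on the positional counting of Lemmas~\ref{expans1} and~\ref{expans2}: once every nonzero specialization of an $\tilde n$-bridge is again an $\tilde n$-bridge, the maximality of the number of $\tilde n$-bridges in the dominant branch ``uses up'' all available slots and thereby fixes the positions of the degree-$\tilde n$ semisimple substitutions; you cite the Remark after Theorem~\ref{Can2}, which bounds the number of bridges, but you never draw this positional conclusion, and it is precisely what Theorem~\ref{hikthm7} asserts. Second, since the new case of the paper is a finite base field, the preliminary hiking of \S\ref{firstst1} --- making Frobenius gluing identical via $z_{i'}\mapsto z_{i'}c_{n_j}(y)^{q_1/q_2}$ and cutting off components over fields of the wrong order via $x_i\mapsto (c_{n_i}^{t_i}-c_{n_i})x_i$ (cf.\ Remark~\ref{fin}) --- is not optional bookkeeping: without it the cancellations above can fail, because ``equal'' evaluations in glued blocks differ by a Frobenius twist or a proportionality scalar. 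Your discussion of repeated $y$-variables and of keeping the fundamental substitution alive is a real concern and consonant with the paper (it is why the first-stage commutator $[x_{i_1},h_{\tilde n-1}]$ is placed at the bridge of the fundamental substitution), but it does not substitute for the missing cancellation and positional arguments.
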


\section{Details of hiking}\label{hik}$ $

%
%
%

%

 The
proof of Theorem~\ref{hikthm7} is through a succession of hiking
steps in order both to eliminate ``wrong'' substitutions and then
docking, i.e., insert $h_{\tilde n}$  into the
polynomial.  The latter is achieved by replacing $z_i$ by $h_{\tilde
n} z_i $ and $z_i'$ by $z_i'h_{\tilde n} $; i.e., we pass to $f_{z_i
\mapsto h_{\tilde n} z_i, z_i' \mapsto z_i' h_{\tilde n} } .$

 The hiking procedure   requires
three different stages.

\subsection{Preliminary hiking}\label{firstst1}$ $

Our initial use of hiking is to resolve some technical issues.
First, we want to eliminate the effect of $(q_1,q_2)$-Frobenius
gluing for $q_1 \ne q_2$, since it can complicate docking. Toward
this end, we substitute $z_{i'} c_{n_j}(y)^{q_1/q_2} $ for $z_{i'}$,
for each instance of Frobenius gluing. It makes the Frobenius gluing
identical on $f$.

%


 We also need the base fields of the components all to be the same.
When $\mathcal B'$ is another branch with the same degree vector,
and the corresponding base fields for the $i$-th vertex of $\mathcal
B$ and $\mathcal B'$ are $n_i$ and $n_i'$ respectively, we take $t_i
= q^{n'_i}$ and replace $x_i$ by $(c_{n_i}^{t_i}-c_{n_i})x_i.$ This
cuts off the specializations to matrices over finite fields of the
wrong order.

\subsection{First stage of hiking}\label{firstst}$ $

We have a quasi-linear nonidentity $f$ of a Zariski closed algebra
$A_0$ which has a fundamental substitution in some   branch
$\mathcal B$, where  $\overline{x_{i_1}}$ in $A_0$  is an $\tilde
n$-bridge, corresponding to an edge in the full quiver whose initial
vertex is labeled by $(n_{\ell}, t_{i_1})$  and whose terminal
vertex is labeled by $(n_{i_1+1}, t_{i_1+1})$ where $\tilde  n =
\max \{ n_{i_1}, n_{i_1+1}\}.$ We replace $x_{i_1}$ by $ c_{
n_{i_1} }z_{i_1} [x_{i_1}, h_ {\tilde  n-1} ]
 z_{i_1+1} c_{ n_{i_1+1}  } $,  (where as always the~$c_{n_{i_1}} $ involve new indeterminates in
$v$), and $z_{i_1},z_{i_1+1}$ also are new indeterminates which we
call ``docking indeterminates''); this yields a quasi-linear polynomial in
which any substitution of $x_{i_1}$ into a diagonal block of degree
$<n_1$ or a bridge which is not an $n$-bridge is 0.   For each
semisimple substitution $\overline{x_{i}}$ in a block of degree
$n_i$,  taking $[x_{i_1}, h_ {n_{i_1}}]$ yields 0. This removes all
semisimple component substitutions in $h$ of such $x_i$ whose degree
is too ``small,'' i.e., less than $n_i$. For the time being, we
could still have radical substitutions,
 but first stage hiking does prepare
for their elimination in the second stage.

 The number of
extra $\tilde n$-bridges in a specialization of $ c_{ n_{i_1}
}(v)z_{i_1} [x_{i_1}, h_ {n_{i_1}'}(v)]
 z_{i_1+1} c_{ n_{i_1+1}  }(v) $ is called its (first stage) \textbf{bridge contribution}.
  (In other words, one takes the total number of
 bridges, and subtracts 1 if $\overline {x_i}$ is an $\tilde n$-bridge.) The term $[x_{i_1}, h_ {n_{i_1}'}(v)]$ is called the \textbf{core} of the bridge
 contribution.

\begin{lem}\label{expans15} Any nonzero specialization of $h$ is either $\tilde n$-semisimple, or its bridge
 contribution   is positive.
\end{lem}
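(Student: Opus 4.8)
The statement concerns a nonzero specialization of $h = h_{\tilde n} = h_{\tilde n,1}\cdots h_{\tilde n,t+1}$ *after* first-stage hiking has been applied — so, more precisely, we are looking at a nonzero specialization of the product of $t+1$ components each of the hiked form $c_{n_{i_1}}(v)\,z_{i_1}\,[x_{i_1},h_{n_{i_1}'}(v)]\,z_{i_1+1}\,c_{n_{i_1+1}}(v)$, glued into the dominant branch $\mathcal B$. The goal is a dichotomy: either the whole specialization is $\tilde n$-semisimple (all $x_i$ land in blocks of degree $\tilde n$ and no external radical substitution occurs), or the bridge contribution — the total number of $\tilde n$-bridges minus $1$ for the designated bridge at $x_{i_1}$ — is strictly positive, i.e.\ the specialization exhibits at least two $\tilde n$-bridges.

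The plan is as follows. First invoke Lemma~\ref{expans10}: since $h$ is a product of $t+1$ components and $J^{t+1}=0$, in any nonzero specialization at least one component is specialized entirely by semisimple substitutions, all of the same matrix degree. Because $h_{\tilde n,j}$ is a central polynomial for $\M[{\tilde n}]$ and vanishes on semisimple matrices of degree $<\tilde n$ (as recalled just before Lemma~\ref{expans10}), that purely-semisimple component forces its block to have degree exactly $\tilde n$; moreover the factor $[x_{i_1},h_{n_{i_1}'}(v)]$ inside that component must be nonzero, which (since a nonzero commutator of a matrix with a central element is impossible in a *single* block) means $x_{i_1}$ there is not a semisimple element of that block — it is genuinely a bridge or is killed. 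This is the engine: the first-stage hiking was designed (see \S\ref{firstst}) precisely so that $[x_{i_1},h_{n_{i_1}}]$ annihilates the "too-small" semisimple possibilities and the designated edge at $x_{i_1}$ is forced to be an $\tilde n$-bridge whenever it is not semisimple.

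Now do a case split. If the distinguished substitution $\overline{x_{i_1}}$ is itself semisimple: then — using that the purely semisimple component pins a block of degree $\tilde n$, and that the path of $f$ runs through the dominant branch $\mathcal B$ whose depth is $u$ — either all the $n_j$ equal $\tilde n$ and there are no external radical substitutions (the $\tilde n$-semisimple alternative of the conclusion), or some *other* $\overline{x_i}$ supplies a radical edge; by dominance of $\mathcal B$ and the fact that $\mathcal B$ already carries the maximal number of $\tilde n$-bridges (Definition~\ref{def21}), any such surviving radical edge must itself be an $\tilde n$-bridge, giving bridge contribution $\ge 1$. If instead $\overline{x_{i_1}}$ is radical: by the previous paragraph it is an $\tilde n$-bridge (that is the "$-1$" term), and we must produce a *second* $\tilde n$-bridge. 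Here the Capelli factors $c_{n_{i_1}}(v)$ and $c_{n_{i_1+1}}(v)$ flanking the core do the work: being alternating in $n_{i_1}$ (resp.\ $n_{i_1+1}$) indeterminates and identities of $\M[{n_{i_1}-1}]$, they are nonzero only when evaluated on blocks of exactly those degrees, so the specialization genuinely straddles vertices of the stated degrees, and together with the purely semisimple component of degree $\tilde n$ located somewhere along the branch, the path from initial to terminal vertex must cross into and out of a degree-$\tilde n$ block — forcing at least one $\tilde n$-bridge in addition to the one at $x_{i_1}$. Hence the bridge contribution is positive.

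The main obstacle I expect is the bookkeeping in the radical case: carefully arguing that the flanking Capelli polynomials plus the $J^{t+1}=0$ constraint really do *force* a second $\tilde n$-bridge rather than merely permitting one. The subtlety flagged in the text — an "internal" radical edge between two distinct degree-$\tilde n$ blocks is technically "right" but not a bridge — has to be handled: one must check that such internal edges do not secretly let the specialization evade both horns of the dichotomy, and this is where one leans on the Remark preceding this lemma (the branch $\mathcal B$ was chosen with $k$ bridges and $k'$ internal radical substitutions both maximal, so a wrong substitution cannot have *more* of either) together with the Second Canonization Theorem~\ref{Can2} to exclude extra semisimple components. Once the "no escape" for internal edges is pinned down, the counting argument closes.
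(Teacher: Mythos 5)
Your proposal gathers the right raw materials---Lemma~\ref{expans10}, the vanishing of $h_{\tilde n}$ on semisimple substitutions of degree $<\tilde n$, and the nilpotence bound $J^{t+1}=0$---and these are exactly what the paper's short proof runs on: if the bridge contribution were zero, then by definition every substitution would have to be semisimple or a $(j,j)$-type edge, and if no substitution of degree $\tilde n$ appeared, each of the $t+1$ components of $h$ would be forced to absorb a radical substitution, so the product would vanish. But your text does not close the argument. By your own admission the decisive point---that a nonzero specialization which is not $\tilde n$-semisimple must actually exhibit an extra bridge, and in particular that internal radical edges between glued degree-$\tilde n$ blocks cannot let it slip between the two horns---is deferred as ``the main obstacle I expect,'' supported only by the heuristic that the evaluation ``must cross into and out of a degree-$\tilde n$ block'' and by a list of tools (dominance of $\mathcal B$, the preceding Remark, Theorem~\ref{Can2}). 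That step is the whole content of the lemma, so what you have is a plan with the key counting left open, not a proof.

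There is also a concrete misstep in the ``engine'' you set up. You place the core $[x_{i_1},h_{n_{i_1}'}(v)]$ inside the purely semisimple component furnished by Lemma~\ref{expans10} and conclude that $x_{i_1}$ cannot be semisimple there because ``a nonzero commutator of a matrix with a central element is impossible in a single block.'' The components of $h=h_{\tilde n,1}\cdots h_{\tilde n,t+1}$ are written in their own $y$-indeterminates, so the core is not inside any of them; and the commutator introduced in \S\ref{firstst} is $[x_{i_1},h_{\tilde n-1}]$, whose central polynomial is central only on blocks of degree $\tilde n-1$, so it does not annihilate a semisimple substitution of $x_{i_1}$ in a block of degree $\tilde n$---first-stage hiking is designed to kill substitutions into blocks of too small degree and wrong bridges, not the right semisimple ones. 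Your subsequent case split partly compensates, but the same confusion resurfaces in the radical case, where ``into and out of a degree-$\tilde n$ block'' could a priori happen through the single designated bridge together with internal (glued) edges; the paper disposes of exactly this by the definitional reduction above followed by the $(t+1)$-fold product argument, whereas in your sketch it remains the unresolved gap.
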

\begin{proof} By definition, if the  bridge
 contribution   is 0 then every substitution has to be semisimple or a  $(j,j)$-bridge for some $j$.
 If  $\tilde n$ does not appear then the graph would
have $t$ such bridges.
\end{proof}

\begin{lem}\label{expans16} After the first stage of hiking, a
wrong specialization of an $\tilde n$-semisimple element  cannot be
$m$-semisimple for $m<\tilde n$ unless its bridge contribution is at
least 2.
\end{lem}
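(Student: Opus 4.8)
The plan is to argue by a careful count of bridges, building on Lemma~\ref{expans15} and the definition of bridge contribution. We are given a wrong specialization of an $\tilde n$-semisimple element, and we suppose toward a contradiction that it becomes $m$-semisimple for some $m < \tilde n$ while having bridge contribution at most $1$. The overall strategy: translate the ``$m$-semisimple and wrong'' hypothesis into a statement about the path that the specialization traces in the full quiver after first-stage hiking, and then show that such a path must contain at least two extra $\tilde n$-bridges beyond the one subtracted off in the definition of bridge contribution, contradicting the bound of~$1$.

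First I would recall precisely what first-stage hiking did to the relevant variable: $x_{i_1}$ was replaced by $c_{n_{i_1}}(v) z_{i_1} [x_{i_1}, h_{n_{i_1}'}(v)] z_{i_1+1} c_{n_{i_1+1}}(v)$, so any nonzero specialization of this molecule is forced (via the Capelli factors $c_{n_{i_1}}, c_{n_{i_1+1}}$ and the central-polynomial factors inside the core $[x_{i_1}, h_{n_{i_1}'}(v)]$) to enter and exit through blocks of degrees $n_{i_1}$ and $n_{i_1+1}$, one of which is $\tilde n$. Next I would invoke Lemma~\ref{expans15}: after the hiking, any nonzero specialization of $h$ is either $\tilde n$-semisimple or has positive bridge contribution. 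Since our specialization is assumed to be $m$-semisimple with $m < \tilde n$, it is in particular \emph{not} $\tilde n$-semisimple, so its bridge contribution is at least~$1$. The content of the present lemma is that it is in fact at least~$2$.

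The key step is the following observation: because the element is $\tilde n$-semisimple in the original variables but its hiked specialization lands in an $m$-block with $m < \tilde n$, the path traced must leave an $\tilde n$-block, descend (through the radical) to the $m$-block, and — because $h = h_{\tilde n}$ still must evaluate nontrivially, i.e.\ Lemma~\ref{expans10} forces a purely semisimple component of degree $\tilde n$ — the path must also return to (or otherwise visit) an $\tilde n$-block. Each of the descent and the return is realized by a radical substitution connecting an $\tilde n$-vertex to a vertex of strictly smaller degree, hence each is an $\tilde n$-bridge; together with the structure of the core $[x_{i_1}, h_{\tilde n - 1}]$ — which by the ``wrong'' hypothesis cannot itself supply the $\tilde n \to m$ transition without producing zero unless a genuine extra bridge is present — this yields two $\tilde n$-bridges over and above the one that is subtracted in forming the bridge contribution. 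Hence the bridge contribution is at least~$2$.

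The main obstacle I expect is bookkeeping the interaction between the ``wrong'' condition and the exact role of the core $[x_{i_1}, h_{n_{i_1}'}]$: one must check that a wrong substitution genuinely cannot collapse the two would-be bridges into one by routing through a single glued block (this is where the phrase ``both of whose degrees differ from $m$'' in the definition of wrong is essential) and that the central-polynomial components inside $h_{\tilde n - 1}$ and $h_{n_{i_1}'}$ do not vanish prematurely on the blocks in question, so that the specialization one is counting is genuinely nonzero. Once that is pinned down, the count of $\tilde n$-bridges is forced and the lemma follows; I would present the argument as a short contradiction using Lemmas~\ref{expans10} and~\ref{expans15} together with the explicit shape of the first-stage-hiked molecule.
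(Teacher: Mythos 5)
Your proposal is correct and follows essentially the same route as the paper: the paper's (one-line) proof is precisely your key step, namely that $h_{\tilde n}$ vanishes on semisimple substitutions of degree $m<\tilde n$, so a nonzero evaluation must pass out of the $m$-semisimple component and back, which costs two bridges. Your additional bookkeeping about the shape of the hiked molecule and the ``wrong'' condition only elaborates the same count.
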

\begin{proof} When evaluating $h_{\tilde n}$ on semisimple elements
of degree $m$ we get 0 unless we pass away from the $m$-semisimple
component, which requires two bridges.
\end{proof}

\begin{lem}\label{expans17} After the first stage of hiking, a
wrong specialization of an $\tilde n$-semisimple element is either
$\tilde n$-semisimple or its bridge contribution is at least 1.
\end{lem}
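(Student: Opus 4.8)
The plan is to read off Lemma~\ref{expans17} from Lemmas~\ref{expans15} and~\ref{expans16} by a short case split on the wrong substitution, which I will write $\overline x'$, of the variable that is $\tilde n$-semisimple under the fundamental substitution. The two earlier lemmas already do the work: Lemma~\ref{expans15} is the ``$\tilde n$-semisimple or positive bridge contribution'' dichotomy for specializations of $h=h_{\tilde n}$, and Lemma~\ref{expans16} upgrades the bound to $\ge 2$ in one sub-case; so Lemma~\ref{expans17} should fall out once the possible shapes of $\overline x'$ are enumerated.

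First I would record the trichotomy for $\overline x'$: since it is wrong it is not $\tilde n$-right, and since $\tilde n=\max_j n_j$ it cannot be semisimple of degree greater than $\tilde n$. Hence either (a) $\overline x'$ is semisimple of degree $m<\tilde n$, or (b) $\overline x'$ is a radical substitution --- an external bridge or a $(j,j)$-internal bridge --- both of whose degrees are $<\tilde n$. In case (a), Lemma~\ref{expans16} applies verbatim and forces the bridge contribution to be at least $2$, hence at least $1$, so the second alternative of the lemma holds.

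In case (b) I would apply Lemma~\ref{expans15} to the specialization of $h=h_{\tilde n}$ coming from the given nonzero evaluation: it is either $\tilde n$-semisimple, which is the first alternative, or has positive bridge contribution, which is the second. For the reader I would recall the mechanism: by Lemma~\ref{expans10} one of the $t+1$ components of $h$ receives only semisimple substitutions, and being central for $\M[\tilde n]$ and vanishing on semisimple matrices of degree $<\tilde n$ it must sit in a block of degree exactly $\tilde n$; so if the specialization of $h$ is not $\tilde n$-semisimple, then since $J^{t+1}=0$ prevents all $t+1$ factors from lying in the radical, the evaluation is forced to cross between a $\tilde n$-block and a block of strictly smaller degree --- the one carrying $\overline x'$ --- which is an $\tilde n$-bridge, whence bridge contribution $\ge 1$.

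I do not expect Lemma~\ref{expans17} itself to be the hard part: it is essentially a repackaging of Lemmas~\ref{expans15} and~\ref{expans16} plus the trivial observation that $\overline x'$ is wrong iff it falls into case (a) or (b). The genuine difficulty lies upstream, in the preparatory machinery these lemmas rest on --- the first stage of hiking, which inserted the Capelli factors $c_{n_{i_1}}$ and $c_{n_{i_1+1}}$ pinning the degrees to the dominant branch, and the count of $\tilde n$-bridges itself, which must correctly handle the delicate $(\tilde n,\tilde n)$-internal radical bridges that are ``technically right'' but still have to be accounted for; relative to that, deducing Lemma~\ref{expans17} is routine.
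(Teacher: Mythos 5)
Your proposal is correct and takes essentially the same approach as the paper: the paper's own proof of Lemma~\ref{expans17} is the identical one-sentence argument used for Lemma~\ref{expans16} (a nonzero evaluation of $h_{\tilde n}$ on semisimple elements of degree $m<\tilde n$ vanishes unless the evaluation passes away from that component, which requires bridges), and your case split with appeals to Lemmas~\ref{expans15} and~\ref{expans16} --- together with your re-derivation of that mechanism via Lemma~\ref{expans10} and the nilpotence of $J$ --- is just a repackaging of that same argument. No gap to report.
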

\begin{proof} When evaluating $h_{\tilde n}$ on semisimple elements
of degree $m$ we get 0 unless we pass away from the $m$-semisimple
component, which requires two bridges.
\end{proof}

The first stage of hiking does not instantly zero out bridges
$\overline{x_{i}}$, but does prepare for their elimination in the
second stage.

 Appending the Capelli polynomials also  sets the stage for eliminating other unwanted
substitutions in the second stage.

 After repeated applications of first stage hiking, we  wind up with a new polynomial $f(x_1, \dots, x_\ell;v;z)
$ where we still have our original indeterminates $x_i$ but have
adjoined new indeterminates from $v$ and $z$.


\subsection{Second stage of hiking}$ $

\begin{exmpl} To introduce the underlying principle, here is a slightly more
complicated example. Consider the quiver of three arrows, from
degree 2 to degree 1, degree 1 to degree 1, and finally from degree
1 to degree 1.

 First we multiply on the left by $c_4[h_{1,1},z_1]z_2.$  The second
substitution could have an unwanted position inside the first matrix
block of degree~2, since $c_4[h_{1,1},z_1]z_2$ can be evaluated in
the larger component. We take $f_{x_1 \mapsto c_{2}y'y   x_1} -f_{
x_1 \mapsto x_1 c_{2} y'y} ,$ i.e., we multiply by a central
polynomial $h_2$ on the left and subtract it from a parallel
evaluation of $h_2$ on the right. The unwanted substitution then
cancels out with the other substitution and leaves~0.
\end{exmpl}

 In the second stage of
hiking, in the blended case, we arrange for all nonzero
substitutions to be pure radical.
%
%
%
%

Suppose $f(x_, \dots, x_\ell; y;z;z')$ is already hiked after the
first stage. Suppose in the branch $\mathcal B$ the indeterminate
$z_i$ occurs of degree $d_i$ and the indeterminate $z_{u+1}$ occurs
of degree $d'$, where $1 \le j \le u.$

\begin{prop}\label{expans0} There are three cases to consider:

\begin{enumerate}
\item There is a string $\overline{x_{i-1}}  \overline{x_{i}} \cdots  \overline{x_j} \overline{x_{j+1}}$
where $\overline{x_{i}}, \cdots, \overline{x_j}$ are all semisimple
of the same degree $x_{n_j}$ whereas $\overline{x_{i-1}},
\overline{x_{i}},\ \overline{x_j} \overline{x_{j+1}}$ are both
$\tilde n$-bridges.

We take the polynomial
\begin{equation}\label{OK} f_{z_i \mapsto h_{\tilde n}(y')^{d_i}  z_i}
-f_{ z'_{u+1} \mapsto z'_{u+1} h_{n_u}(y')^{t_i} } ,\end{equation}
where the branch $\mathcal B$ has depth $u$ and $t_i$ designates the
maximal degree of $x_i$ in a monomial of $\mathcal B$, where $y'$ is
a fresh new set of indeterminates

\item There is a string $\overline{x_{1}}  \overline{x_{i}} \cdots \overline{x_j} \overline{x_{j+1}}$
where $\overline{x_{1}} \cdots, \overline{x_j}$ are all semisimple
of the same degree $x_{n_1}$ whereas $\overline{x_{j}}
\overline{x_{j+1}}$ is an $\tilde n$-bridge. We take the polynomial
\begin{equation}\label{OK2} f_{z_1 \mapsto h_{\tilde n}(y')^{d_1}  z_1}
 .\end{equation}

\item There is a string $  \overline{x_{i-1}}   \overline{x_{i}} \cdots  \overline{x_{k}} \overline{x_k}$
where $\overline{x_{i}}, \cdots, \overline{x_{k-1}}$ are all
semisimple of the same degree $x_{n_1}$ whereas $
\overline{x_{i-1}}\overline{x_{i}}$ is an $\tilde n$-bridge. We take
the polynomial
\begin{equation}\label{OK3} f_{z_1 \mapsto h_{\tilde n}(y')^{d_1}  z_1}
 .\end{equation}
 \end{enumerate}

 This hiking zeroes out semisimple evaluations of highest degree
($\tilde n$), but not a radical evaluation at the $u$ block.
\end{prop}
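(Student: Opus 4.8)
The plan is to establish the concluding sentence of the proposition --- that the prescribed substitutions annihilate every nonzero evaluation in which the string $\overline{x_i}\cdots\overline{x_j}$ is semisimple of the top degree $\tilde n$, while preserving a radical evaluation at the $u$-block --- by one uniform mechanism covering all three cases. In each case we have inserted a high power of the central polynomial $h_{\tilde n}$, in a fresh variable set $y'$, immediately next to the docking indeterminate ($z_i$ in case (1), $z_1$ in cases (2) and (3)), and in case (1) we have also subtracted the parallel term carrying $h_{n_u}(y')^{t_i}$ next to $z'_{u+1}$. I would then analyze an arbitrary nonzero evaluation of the resulting polynomial according to where the string lands: inside a single degree-$\tilde n$ semisimple block --- the ``wrong'' evaluation to be killed --- or genuinely crossing a bridge, as in the fundamental substitution, whose docking indeterminate sits at a strictly radical edge of the dock.

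For the elimination: if the string lies inside a degree-$\tilde n$ semisimple block, then $h_{\tilde n}(y')$ is evaluated on semisimple matrices of degree $\tilde n$, so by centrality of $h_{\tilde n}$ on $M_{\tilde n}$ it takes a scalar value, which commutes past everything and can be pulled out of the monomial; the first term of \eqref{OK} thereby reduces to a scalar multiple of the evaluation of $f$ with $z_i$ restored. The exponents $d_i$ (the degree of $z_i$ along $\mathcal B$) and $t_i$ (the maximal degree of $x_i$ in a monomial of $\mathcal B$) are chosen so that the subtracted term of \eqref{OK} produces exactly the same scalar multiple of the same evaluation of $f$ --- here one uses that on such a substitution $h_{n_u}(y')$ is likewise evaluated on semisimple elements of degree $\ge n_u$, so its degree-$n_u$ components contribute a matching scalar power --- whence the two terms cancel and the value is $0$. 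In cases (2) and (3) the string abuts the unique initial (resp.\ terminal) vertex of the basic quiver, so a single insertion next to $z_1$ already suffices: a degree-$\tilde n$ semisimple occurrence forces $h_{\tilde n}(y')$ to be evaluated below degree $\tilde n$ somewhere along the string, where it vanishes (cf.\ the reasoning of Lemma~\ref{expans10}), so \eqref{OK2} and \eqref{OK3} annihilate the wrong evaluations outright, with no parallel term needed.

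For the survival of the radical evaluation, I would start from the fundamental substitution --- for which the string is semisimple of degree $n_j$ but the flanking substitutions are genuine $\tilde n$-bridges, so the docking indeterminate occupies a strictly radical edge --- and extend it by sending the fresh variables $y'$ to degree-$\tilde n$ semisimple matrices on which $h_{\tilde n}$ takes a nonzero scalar value. Then $h_{\tilde n}(y')^{d_i}$ is a nonzero scalar and the first term of \eqref{OK} has the same nonzero value as $f$ on the fundamental substitution, up to that scalar. The subtracted term cannot cancel it: its inserted factor $h_{n_u}(y')^{t_i}$ sits at the $u$-block rather than at the radical edge, so on this substitution it either evaluates to $0$ or lands in a different Peirce component, and in neither case does it reproduce the first term. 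Hence the hiked polynomial is still a nonidentity, its nonzero evaluations no longer include the degree-$\tilde n$ semisimple ones, and the radical evaluation at the $u$-block is untouched.

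The main obstacle is exactly this balancing act: one must verify, Peirce component by Peirce component and tracking the exponents $d_i$ and $t_i$, that the subtracted parallel term of \eqref{OK} behaves oppositely on the two kinds of evaluation --- producing the precise scalar needed for cancellation on every wrong, degree-$\tilde n$ semisimple evaluation, yet failing to interfere with the surviving radical evaluation at the $u$-block. This is the crux of why ``$h_{\tilde n}$ central on $M_{\tilde n}$'' (so it slides out and forces cancellation on wrong evaluations) together with the docking indeterminate ``sitting at a radical edge'' (so the same insertion is a genuine new factor on the good evaluation) makes the hiking accomplish precisely what is claimed.
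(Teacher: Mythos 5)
Your proposal is correct and takes essentially the same route as the paper's (very terse) proof: on a wrong degree-$\tilde n$ semisimple evaluation the central values of $h_{\tilde n}(y')$ slide out and the two parallel summands of \eqref{OK} cancel, while on the radical (fundamental) substitution exactly one of the two summands is nonzero, so the difference survives and the hiked polynomial remains a nonidentity. Your extra bookkeeping with the exponents $d_i,t_i$ and the single-insertion treatment of cases (2) and (3) at the ends of the polynomial merely spells out details the paper leaves implicit.
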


\begin{proof} (Note that (1) is the usual case, but we also need (2)
and (3) to handle terms lying at the ends of the polynomial.) The
expression \eqref{OK} yields zero on a semisimple substitution, but
not on a radical substitution, since exactly one of the two summands
of \eqref{OK} would be 0.
\end{proof}

%
%
%


%

\begin{lem}\label{expans1} The second stage of hiking forces any
nonzero specialization of an $\tilde n$-bridge also to be a $\tilde
n$-bridge.
\end{lem}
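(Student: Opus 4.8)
The plan is to exclude, after the second stage, every value that an $\tilde n$-bridge variable $x_{i_1}$ could take in a nonzero evaluation except that it again be an $\tilde n$-bridge. Recall first that the first stage has replaced $x_{i_1}$ by the molecule $c_{n_{i_1}}(v)\,z_{i_1}\,[x_{i_1},h_{\tilde n-1}(v)]\,z_{i_1+1}\,c_{n_{i_1+1}}(v)$, where $\tilde n=\max\{n_{i_1},n_{i_1+1}\}$. The commutator $[x_{i_1},h_{\tilde n-1}(v)]$ vanishes whenever $x_{i_1}$ is given a semisimple value in a block of degree at most $\tilde n-1$ (there $h_{\tilde n-1}$ is either zero or a scalar and so commutes with $x_{i_1}$), so such values are already dead. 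Hence only three offending configurations for $\overline{x_{i_1}}'$ remain: (i) semisimple of degree exactly $\tilde n$; (ii) a radical bridge neither of whose endpoints has degree $\tilde n$; and (iii) $\tilde n$-internal. Case (iii) is the delicate "technically right" case noted after Lemma~\ref{expans10}; it is excluded here only with the extra input of the maximality of the number of internal radical substitutions, and otherwise is carried along to the next step, so the real content is to kill (i) and (ii).

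For case (i) I would invoke Proposition~\ref{expans0}: a $\tilde n$-semisimple value of $\overline{x_{i_1}}'$ puts the string of $f$ around $x_{i_1}$ into one of the three shapes of that proposition, and the associated second-stage replacement --- \eqref{OK}, or \eqref{OK2}/\eqref{OK3} at the ends of the branch --- is then a difference of two evaluations of powers of $h_{\tilde n}(y')$ that agree on a configuration which is $\tilde n$-semisimple along that string, hence cancels to $0$. So no nonzero evaluation can have $\overline{x_{i_1}}'$ semisimple of degree $\tilde n$. For case (ii), suppose $\overline{x_{i_1}}'$ is a radical bridge between blocks of degree below $\tilde n$. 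The molecule around $x_{i_1}$ still carries a Capelli factor $c_{\tilde n}$, and the second stage has inserted powers of $h_{\tilde n}(y')$, each of which by Lemma~\ref{expans10} forces a genuine $\tilde n$-semisimple component into any nonzero evaluation; since $\overline{x_{i_1}}'$ is a non-$\tilde n$ bridge, these $\tilde n$-semisimple components lie in portions of the path separated by it, so entering and leaving them costs radical $\tilde n$-bridges and, by Lemmas~\ref{expans15}--\ref{expans17}, forces a strictly positive bridge contribution (at least $2$ each time an $m$-semisimple component with $m<\tilde n$ is traversed). Adding these bridges to those already present along the dominant branch $\mathcal B$, the evaluation acquires strictly more $\tilde n$-bridges than $\mathcal B$; but, by the reduction via Theorem~\ref{Can2} recorded earlier, any wrong nonzero substitution may be taken to have exactly as many bridges as $\mathcal B$, while $\mathcal B$ has the maximal number of $\tilde n$-bridges by Definition~\ref{def21}. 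This contradiction forces the evaluation to vanish, so $\overline{x_{i_1}}'$ must be an $\tilde n$-bridge.

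The step I expect to be the main obstacle is the bridge count in case (ii) (and its variant in case (iii)): one must be sure that the $\tilde n$-semisimple components forced by the inserted $c_{\tilde n}$ and $h_{\tilde n}(y')$ really cost \emph{new} $\tilde n$-bridges, not already counted among those of $\mathcal B$, so that the inequality is genuinely strict. Making this precise requires tracking the positions of the inserted factors relative to the evaluated path and using the full strength of the dominance of $\mathcal B$ --- maximal number of $\tilde n$-bridges, then maximal length, then maximal number of $\tilde n$-bridge vertices --- together with the maximality of the number of internal radical substitutions, which is exactly what prevents the offending bridge in case (iii) from being reabsorbed as an internal substitution. Once these maximality properties are applied correctly, the count is forced and the lemma follows.
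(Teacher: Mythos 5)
Two of your three cases are not actually closed, and in both places the paper's route is more direct. For your case (ii) (a radical bridge with neither endpoint of degree $\tilde n$), the paper needs no bridge count at all: the first stage of hiking already arranges, via the Capelli factors $c_{n_{i_1}}$, $c_{n_{i_1+1}}$ (one of which has degree $\tilde n$) flanking the core $[x_{i_1}, h_{\tilde n-1}]$, that any substitution of $x_{i_1}$ into a small diagonal block or into a bridge which is not an $\tilde n$-bridge is $0$; the proof of Lemma~\ref{expans1} simply records this as ``in order to provide a nonzero value, at least one of its vertices must be of degree $\tilde n$.'' Your substitute --- forcing extra $\tilde n$-bridges and contradicting the dominance of $\mathcal B$ --- is exactly the step you yourself flag as the main obstacle and never carry out (``once these maximality properties are applied correctly, the count is forced''), so as written it is an assertion rather than a proof; the strictness of the count is precisely what needs checking, since the inserted $h_{\tilde n}(y')$ components could a priori be evaluated in degree-$\tilde n$ blocks already visited by $\mathcal B$, costing no new bridges. (A count of that ``uses up all the places'' type is what the paper deploys for the \emph{next} statement, Lemma~\ref{expans2}, not for this one.)

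Your case (iii) ($\tilde n$-internal) is also left open (``carried along to the next step''), but the lemma asserts that \emph{every} nonzero specialization is an $\tilde n$-bridge, and an internal radical substitution between glued degree-$\tilde n$ blocks is not an $\tilde n$-bridge; the paper disposes of it together with case (i) as the ``both vertices of degree $\tilde n$'' configuration, which is zeroed by Lemmas~\ref{expans15}--\ref{expans17} and Proposition~\ref{expans0}. Your appeal to the maximality of the number $k'$ of internal radical substitutions only bounds how many such substitutions occur; it does not make this particular specialization vanish. Your case (i) is handled as in the paper (the second-stage differences \eqref{OK}--\eqref{OK3} cancel $\tilde n$-semisimple values of $x_{i_1}$), and that part is fine.
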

\begin{proof} In order to provide a nonzero value, at least one of its
vertices must be of degree~$\tilde n$. But if both were  $\tilde n$
the evaluation would be  $ 0,$ by
Lemmas~\ref{expans15}--\ref{expans17} and Remark~\ref{expans0}. Thus
we get an $\tilde n$-bridge.
\end{proof}

\begin{lem}\label{expans2} After the first and second stages of
hiking, the positions of semisimple substitutions of degree~$\tilde
n$ are fixed; in other words, semisimple substitutions of
degree~$\tilde n$ are $\tilde n$-right.
\end{lem}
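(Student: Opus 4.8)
The plan is to combine the structural constraints established in Lemmas~\ref{expans15}--\ref{expans17} with the new cancellation mechanism of Proposition~\ref{expans0} (here labelled Remark~\ref{expans0} in the proof of Lemma~\ref{expans1}) and the conclusion of Lemma~\ref{expans1}. First I would fix a nonzero specialization of the hiked polynomial and an index $i$ such that the original $\overline{x_i}$ is semisimple of degree $\tilde n$; the goal is to show that in \emph{any} nonzero specialization $\overline{x_i}'$, the variable $x_i$ still lands in a semisimple block of degree $\tilde n$, i.e.\ its position in the dominant branch $\mathcal B$ is unchanged. The candidates for going wrong are: (a) $\overline{x_i}'$ becomes semisimple of some degree $m<\tilde n$; (b) $\overline{x_i}'$ becomes a radical (bridge or internal) substitution; (c) $\overline{x_i}'$ becomes semisimple of degree $\tilde n$ but in the \emph{wrong} glued block along the branch. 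Cases (a) and (b) are what the first stage of hiking was designed to kill, and (c) is where the second-stage replacements \eqref{OK}, \eqref{OK2}, \eqref{OK3} do the work.

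For case (a): after the first stage, by Lemma~\ref{expans16} a wrong $\tilde n$-semisimple element can only degenerate to an $m$-semisimple one ($m<\tilde n$) if its bridge contribution is at least $2$; but then some other variable must absorb at least two extra $\tilde n$-bridges, which contradicts the maximality of $k$ built into the dominant branch (Definition~\ref{def21}) together with the remark that any wrong substitution has at most $k$ bridges and at most $k'$ internal radical substitutions. So (a) forces a zero evaluation. For case (b): Lemma~\ref{expans1} already forces any nonzero specialization of an $\tilde n$-bridge to be an $\tilde n$-bridge, so a semisimple $\overline{x_i}$ cannot turn into a radical substitution that contributes an $\tilde n$-bridge without, again, exceeding the bridge count of $\mathcal B$; an internal radical substitution is excluded the same way using $k'$-maximality and Lemma~\ref{expans15}. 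For case (c): the point of the three-part Proposition~\ref{expans0} is that each string of equal-degree semisimple substitutions flanked by $\tilde n$-bridges gets an inserted $h_{\tilde n}(y')^{d}$ on one side and, in case (1), a parallel term subtracted on the other; a nonzero semisimple evaluation of the highest degree is killed by the cancellation between the two summands of \eqref{OK} (and trivially at the two ends by \eqref{OK2}, \eqref{OK3}), while a genuine radical evaluation at the depth-$u$ block survives because exactly one summand vanishes. Thus the only surviving nonzero semisimple-of-degree-$\tilde n$ evaluations of $x_i$ are the ones sitting in the correct block of $\mathcal B$, which is precisely the assertion that they are $\tilde n$-right.

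I would then assemble these three cases into a short argument: take any nonzero specialization; by Proposition~\ref{quasi} we may treat the $x_i$ as pure, so $\overline{x_i}'$ is either pure semisimple or pure radical; the radical alternative is (b), excluded above; if it is semisimple of degree $<\tilde n$ that is (a), excluded above; if it is semisimple of degree $\tilde n$ in the wrong glued block that is (c), excluded by Proposition~\ref{expans0}; hence it is semisimple of degree $\tilde n$ in the same block as in the fundamental substitution. Since $i$ was an arbitrary index whose fundamental value is $\tilde n$-semisimple, every such position is fixed, which is the statement of the lemma.

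The main obstacle I anticipate is bookkeeping case (c) cleanly: one must check that the inserted powers $h_{\tilde n}(y')^{d_i}$ and the parallel subtracted term $z'_{u+1}h_{n_u}(y')^{t_i}$ in \eqref{OK} really do cancel on \emph{every} competing wrong $\tilde n$-semisimple placement, not just the generic one, and that the degrees $d_i$, $d'$, $t_i$ are matched correctly so that the two monomials appearing in \eqref{OK} are genuinely equal after a wrong semisimple substitution. This is the step where one has to use quasi-linearity of $f$ (so that the substituted molecules enter linearly and the two summands have literally the same shape) together with the fact that the branch $\mathcal B$ was chosen with $k'$ internal radical substitutions maximal, so that no extra internal radical bridge can sneak in to spoil the cancellation. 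The remaining cases (a), (b) are essentially immediate from the lemmas already proved in this section.
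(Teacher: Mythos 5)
Your proposal assembles the right ingredients and reaches the right conclusion, but it routes the decisive step differently from the paper. The paper's proof is a short positional pigeonhole: Lemma~\ref{expans1} (nonzero specializations of $\tilde n$-bridges are again $\tilde n$-bridges) together with the maximality of the number of $\tilde n$-bridges in the dominant branch $\mathcal B$ means all the ``slots'' for $\tilde n$-bridges are used up; then, if $\mathcal B$ has any degree-$\tilde n$ semisimple substitution at all, it must border an $\tilde n$-bridge, which pins the order of the pair of indices in that bridge and hence fixes the positions of all the gaps of index $\tilde n$ between the $\tilde n$-bridges --- no separate treatment of the ``right degree, wrong glued block'' possibility is needed. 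Your cases (a) and (b) are essentially this same maximality argument (though note the contradiction is to the maximal \emph{number of $\tilde n$-bridges} in Definition~\ref{def21}, not to the maximal length $k$), and much of it is already encapsulated in Lemma~\ref{expans1}, whose proof cites Lemmas~\ref{expans15}--\ref{expans17} and Proposition~\ref{expans0}. Where you genuinely diverge is case (c): you go back to the cancellation mechanism \eqref{OK}--\eqref{OK3} and must then verify, as you yourself flag, that the two summands cancel on \emph{every} competing wrong placement --- a bookkeeping burden the paper avoids entirely by the bordering-a-bridge/gap argument. Given the level of rigor in this paper, your route is acceptable, but the paper's argument is the cleaner one precisely at the point you identify as the main obstacle; if you keep your structure, you should either carry out that cancellation check or replace case (c) by the observation that the fixed bridge positions leave no admissible slot of degree $\tilde n$ other than the ones in $\mathcal B$.
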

\begin{proof} Lemma~\ref{expans1} ``uses up'' all the places for $\tilde n$-bridges,
since more $\tilde n$-bridges would yield a substitution
contradicting the maximality of the number of $\tilde n$-bridges in $\mathcal B$.
If $\mathcal B$ has no semisimple substitutions of degree $\tilde n$
then there is no room for any semisimple substitutions of degree
$\tilde n$, and we are done.

But if  $\mathcal B$ has a semisimple substitution  of degree
~$\tilde n$, that substitution must border an $\tilde n$-bridge,
fixing the order of the pair of indices in the $\tilde n$-bridge,
and thus fixing the positions of all the gaps of index $\tilde  n$
between $\tilde n$-bridges, so we are done.
\end{proof}

\begin{rem}\label{fin} Although this is taken care of in the proof, we can remove finite components
simply by substituting $x_i^m - x_i^\ell$ for $x_i$, for suitable
$\ell,m$.\end{rem}

 {\bf Proof of
Theorem~\ref{hikthm7}.} Just iterate the hiking procedure down from
$\tilde n$.  $\square$

\bigskip

 {\bf Proof of Theorem~\ref{hikthm}.} One obtains the dock
 by replacing $f$ by
$$f_{z_u \mapsto c_{\tilde n}(y')^{t_u} z_u, \ z'_{u+1} \mapsto
z'_{u+1} c_{n_u}(y')^{t_1}}.$$ \hfill $\square$

%
%

\begin{exmpl} Let us run  through the hiking procedure, taking $$A_0
=  \left\{\left(\begin{array}{ccccc}
* & * & * & * & *\\ %
* & * & * & * & *\\ %
* & * & * & * & *\\ %
0 & 0 & 0 & * & * \\
0 & 0 & 0 & * & *
\end{array}\right)\right\}.$$ We have the full quiver $$I_1 \to \II_2,$$ and take the
nonidentity $f = x_1 [x_2, x_3]x_4 + x_4[x_2, x_3]x_1^2.$ We have
nonzero specializations with $\overline{x_1}$ in the first matrix
component, $\overline{x_2}$ an external radical specialization, and
$\overline{x_3}$ in the second matrix component, which we denote as
$\mathcal C = \M[2](K)$, but also we have a nonzero specialization
of all variables into $\mathcal C$. To avoid this situation, we
replace $f$ by
$$f(c_3(y)zx_1z',x_2,x_3,x_4) =  c_3(y) zx_1z' [x_2, x_3]x_4+
 x_4 [x_2,  x_3]^2 c_3(y)  zx_1 z' c_3(y) zx_1 z' .$$   Now any specialization into $\mathcal C $ becomes 0, so
we have eliminated some ``wrong'' specializations. For stage 2 we
take $$\begin{aligned} \tilde f( x;y;y';  z;z')& :  =
f(c_3(y)c_3(y')^2 zx_1z',x_2,x_3,x_4) - f(c_3(y)
zx_1z'c_3(y'),x_2,x_3,x_4)  \\  = (c_3(y)& c_3(y')^2 zx_1z' [x_2,
x_3]x_4+
 x_4 [x_2,  x_3]^2 c_3(y) c_3(y')^2 zx_1 z' c_3(y) zx_1 z')\\ & - (c_3(y)  zx_1z' c_3(y') [x_2,
x_3]x_4+
 x_4 [x_2,  x_3]^2 c_3(y)  zx_1 z' c_3(y') zx_1 z'c_3(y')) ,\end{aligned}$$
 where we see the specialization of highest degree to the first matrix component has
 been eliminated. We can eliminate the nonzero specializations of
 $h(y'')_3$ of degree 1 by taking
 $\tilde f( x;y;y';c_3(y'') z;z')- \tilde f( x;y;y'; z;z'c_3(y''))$
 which leaves us only with  a radical specialization and a
 critical polynomial with a single
 dock $c_3(y'')$.

 Note how quickly the polynomial becomes complicated even though we
 have hiked only one indeterminate.
\end{exmpl}

\begin{rem} Other examples of hiking are given in \cite{BRV5}. The main difference between the hiking procedure of this
paper and that of stage 3 hiking of \cite{BRV5} is in the treatment
of the Frobenius. Stage 4 hiking is analogous.
\end{rem}

\section{Characteristic coefficient-absorbing polynomials inside
T-ideals}\label{trab}$ $

 We have already pinpointed the
  $x_i$ that must have substitutions into semisimple blocks of
degree $\tilde n$, in order to utilize the well-understood
properties of semisimple matrices (especially the coefficients of
their characteristic polynomials, which we call
\textbf{characteristic coefficients}). We follow the discussion of
coefficient-absorbing polynomials from \cite[Theorem~4.26]{BRV5} and
\cite[\S 6.3]{BRV6}, although we can skip much of it because we
already have a docked polynomial.

%
%

Any matrix $a \in \M[n](K)$ can be viewed either as a linear
transformation on the $n$-dimensional space $V = K^{(n)}$, and thus
having Hamilton-Cayley polynomial $f_a$ of degree~$n$, or (via left
multiplication) as a linear transformation $\tilde a$ on the
$n^2$-dimensional space $\tilde V = \M[n](K)$ with Hamilton-Cayley
polynomial $f_{\tilde a}$ of degree $n^2$. The matrix $\tilde a$ can
be identified with the matrix $$a \otimes I \in \M[n](K) \otimes
\M[n](K) \cong \M[n^2](K),$$ so its eigenvalues have the form $\beta
\otimes 1 = \beta$ for each eigenvalue $\beta$ of $a$. From this, we
conclude:

\begin{prop}[{\cite[Proposition~2.4]{BRV3}}]\label{obv2}
Suppose $a \in \M[n](F)$. Then the characteristic coefficients of
$a$ are integral over the $F$-algebra $\hat{C}$ generated by the
characteristic coefficients of $\tilde a$.
\end{prop}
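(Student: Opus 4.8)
The plan is to reduce the statement about the characteristic coefficients of $a$ to a purely multiplicative statement about eigenvalues, using the identification of $\tilde a$ with $a \otimes I$ already recorded in the paragraph preceding the proposition. First I would pass to a splitting field $L \supseteq F$ over which $a$ is triangularizable, and write $\beta_1, \dots, \beta_n$ for the eigenvalues of $a$ (with multiplicity). Then the eigenvalues of $\tilde a = a \otimes I$ are the $n^2$ scalars $\beta_i$, each appearing with multiplicity $n$; equivalently, $f_{\tilde a}(\lambda) = \prod_{i=1}^n (\lambda - \beta_i)^n = f_a(\lambda)^n$. Consequently the characteristic coefficients of $\tilde a$ are, up to sign and up to the combinatorial coefficients coming from expanding an $n$-th power, the elementary symmetric functions of the multiset $\{\beta_i \text{ with multiplicity } n\}$, and these are polynomials with integer coefficients in the elementary symmetric functions $e_1(\beta), \dots, e_n(\beta)$, i.e.\ in the characteristic coefficients of $a$ themselves.

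That observation by itself gives the reverse inclusion (the characteristic coefficients of $\tilde a$ lie in $F[\text{char. coeffs of } a]$), so the real content is to go the other way: to show each $e_j(\beta)$ is integral over $\hat C$, the $F$-algebra generated by the characteristic coefficients of $\tilde a$. The key step is to exhibit, for each eigenvalue $\beta$ of $a$, a monic polynomial over $\hat C$ that $\beta$ satisfies. But $\beta$ is an eigenvalue of $\tilde a$, hence a root of the characteristic polynomial $f_{\tilde a}(\lambda) = \lambda^{n^2} - (\text{char. coeff}_1)\lambda^{n^2-1} + \cdots$, which is monic with coefficients in $\hat C$ by definition. So every $\beta_i$ is integral over $\hat C$. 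Integral elements over a commutative ring form a ring, so every polynomial expression in the $\beta_i$ with coefficients in $\hat C$ — in particular each elementary symmetric function $e_j(\beta)$, which is the $j$-th characteristic coefficient of $a$ up to sign — is again integral over $\hat C$. That is exactly the assertion.

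The one point that needs a little care, and which I expect to be the main (minor) obstacle, is the passage through the splitting field: a priori the $\beta_i$ and the integrality witnesses live over $L$, not over $F$, so I must check that the conclusion descends to $F$. This is fine because the statement ``$c$ is integral over $\hat C$'' for an element $c \in F$ is insensitive to enlarging the ambient field — if $c \in F$ satisfies a monic polynomial with coefficients in $\hat C \subseteq F[\beta_1,\dots,\beta_n]$, one can take norms or simply use that the characteristic coefficients of $a$ already lie in $F$ and satisfy monic relations over $\hat C \subseteq F$ obtained by clearing the field extension (e.g.\ via the fact that $\hat C$ is itself generated by elements of $F$). Concretely, since $f_{\tilde a}$ already has coefficients in $F$ (it is the characteristic polynomial of a matrix over $F$), the monic relation witnessing integrality of each $\beta_i$ over $\hat C$ has coefficients in $\hat C \cap F = \hat C$, so no descent argument beyond this remark is needed. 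I would close by remarking that the same argument, applied to the $n$-th power factorization $f_{\tilde a} = f_a^{\,n}$, makes the integrality completely explicit if one prefers to avoid invoking the ring-of-integral-elements lemma.
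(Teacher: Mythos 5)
Your proposal is correct and follows essentially the same route as the paper's (very terse) proof: the eigenvalues of $\tilde a$ coincide with those of $a$ and are integral over $\hat C$ as roots of the monic polynomial $f_{\tilde a}$ whose coefficients generate $\hat C$, so the characteristic coefficients of $a$, being symmetric polynomial expressions in these eigenvalues, lie in the integral closure of $\hat C$. Your extra observations (the reverse inclusion via $f_{\tilde a}=f_a^{\,n}$ and the remark that no descent from the splitting field is needed) are harmless elaborations of the same argument, not a different method.
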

\begin{proof}
The integral closure of $\hat{C}$ contains all the eigenvalues of
$\tilde a,$ which are the eigenvalues of $a,$ so the characteristic
coefficients of $\tilde a$ also belong to the integral closure.
\end{proof}

Next we use hiking also to force the characteristic
coefficients of the matrices to commute with each other.
%
%
%
Using Theorem~\ref{hikthm}, we  work with quasi-linear polynomials
and pinpoint semisimple substitutions of degree $\tilde n$, in order
to utilize the well-understood properties of semisimple matrices
(especially the characteristic coefficients).


Having obtained semisimple substitutions of degree $ \tilde n$, we
have two ways of obtaining intrinsically the coefficients of the
characteristic polynomial
$$g_a = \la^n + \sum_{k=1}^{n-1} (-1)^k \a_k (a) \la ^{n-k}$$ of
a matrix $a$. Fixing $k$, we write $\a_k$ for $\a_k(a),$ which we
call the $k$-\textbf{characteristic coefficient} of $a$. We want to
extract these characteristic coefficients, by means of polynomials.

\begin{defn}\label{trmat0}
In any matrix ring $\M[n](W)$, we define
\begin{equation}\label{trmat}
\alphaj(a) : = \sum_{j=1}^n \sum e_{j,i_1} a e_{i_2,i_2}a \cdots a
e_{i_{k}i_k} a e_{i_1,j},\end{equation} the inner sum taken over all
index vectors of length $k$.
\end{defn}

We can also define the characteristic coefficients
 via polynomials.

\begin{defn}\label{absorp} Given a quasi-linear polynomial $f(x;y)$ in indeterminates
labeled $x_i,y_i$, we say $f$ is {\bf characteristic
coefficient-absorbing} with respect to a full quiver~$\Gamma$ if $f(
A_0(\Gamma))^+$
 absorbs multiplication by any characteristic coefficient of any
 element in each docked (diagonal)
 matrix block of a molecule of $ A_0(\Gamma)$.
 \end{defn}

\begin{lem}[as in {\cite[Lemma~3.6]{BRV4}}]\label{q1}
Write the polynomial $f$ of Theorem~\ref{hikthm} as a sum of
homogeneous components $\sum f_j$. Each $f_j$ is characteristic
coefficient absorbing in the blocks of degree $\tilde n$.
\end{lem}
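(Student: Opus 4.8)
The plan is to reduce the statement to the homogeneity properties of characteristic coefficients already established in the quasi-linear setting, exploiting the docked structure produced by Theorem~\ref{hikthm}. First I would recall that by Theorem~\ref{hikthm} the polynomial $f$ in question is critical, hence docked: it can be written as $\sum_u g_{u,1}h_{\tilde n}(y) g_{u,2}h_{\tilde n}(y)\cdots g_{u,d}h_{\tilde n}(y)g_{u,d+1}$, and by Proposition~\ref{quasi} together with the first-stage hiking of \S\ref{firstst1} we may assume $f$ is quasi-linear, with each indeterminate of $p$-power degree. The dock components $h_{\tilde n}(y)$ are central polynomials for $\M[\tilde n](F)$, so on any nonzero evaluation each one contributes a scalar from the relevant degree-$\tilde n$ block. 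The claim to be proved is that passing to a single homogeneous component $f_j$ in the grading by multidegree does not destroy the characteristic-coefficient-absorbing property inside the degree-$\tilde n$ blocks.

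The key point, as in \cite[Lemma~3.6]{BRV4}, is that characteristic coefficients themselves are homogeneous operators: multiplication of the space $f(A_0(\Gamma))^+$ by a fixed characteristic coefficient $\alpha_k(a)$ of a molecule element $a$ in a degree-$\tilde n$ block is a graded operation, because $\alpha_k(a)$ is a homogeneous polynomial expression of degree $k$ in the entries of $a$ (visible from the formula $\alphaj(a) = \sum_{j}\sum e_{j,i_1}ae_{i_2,i_2}a\cdots ae_{i_k,i_k}ae_{i_1,j}$ of Definition~\ref{trmat0}, which is literally of total degree $k$ in $a$). So I would argue: write $f = \sum_j f_j$ as its decomposition into multihomogeneous components. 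For each molecule block of degree $\tilde n$ and each characteristic coefficient $\alpha$ of an element there, the absorbing property of $f$ says $\alpha\cdot f(A_0(\Gamma))^+ \subseteq f(A_0(\Gamma))^+$. Since $\alpha$ shifts multidegree in a definite way (by the degree of $\alpha$ in the relevant $y$- or $x$-indeterminates governing that block), and since $f(A_0(\Gamma))^+$ is the span of the images of all the $f_j$'s which sit in distinct multidegrees, the containment must hold componentwise: $\alpha\cdot f_j(A_0(\Gamma))^+ \subseteq f_{j'}(A_0(\Gamma))^+$ for the component $j'$ whose multidegree is that of $j$ plus the degree of $\alpha$. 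Hence each $f_j$ is itself characteristic coefficient-absorbing in the degree-$\tilde n$ blocks.

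The step I expect to be the main obstacle is bookkeeping the grading correctly when indeterminates repeat. As noted in Definition~\ref{doccr}, if the $x_i$ repeat then the molecules repeat and so do the $y$-variables inside the $h_{\tilde n}(y)$, so the "multidegree" with respect to which one decomposes $f$ is not quite the naive one — one must grade by the variables that genuinely control a given docked block, and check that the characteristic-coefficient operator $\alpha_k$ acts with a well-defined degree with respect to that finer grading. This requires identifying, for each docked block, exactly which family of $y$-indeterminates it absorbs through (those appearing in that copy of $h_{\tilde n}$), and verifying that the Hamilton--Cayley relation forces $\alpha_k$ to be expressible homogeneously of degree $k$ in precisely those variables after the docking substitution $z_u \mapsto c_{\tilde n}(y')^{t_u}z_u$, $z'_{u+1}\mapsto z'_{u+1}c_{n_u}(y')^{t_1}$ of the proof of Theorem~\ref{hikthm}. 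Once the grading is set up so that this homogeneity is manifest, the componentwise splitting of the absorption inclusion is formal, exactly as in \cite[Lemma~3.6]{BRV4}.
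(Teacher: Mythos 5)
Your argument runs in the wrong direction and assumes the very thing the lemma is meant to supply. You take as given that the whole polynomial $f$ is characteristic coefficient-absorbing (``the absorbing property of $f$ says $\alpha\cdot f(A_0(\Gamma))^+\subseteq f(A_0(\Gamma))^+$'') and then try to transfer this to each homogeneous component $f_j$ by a grading argument. But no prior result establishes absorption for $f$; producing absorption is exactly the content of the lemma. The paper's proof obtains it directly from the substitution identity \eqref{traceab0} (the mechanism of \cite[Theorem~J, Equation~1.19, page~27]{BR}): since each $f_j$ carries the docked components $h_{\tilde n}$, alternating in a set of variables whose cardinality matches the dimension of the degree-$\tilde n$ block, one has
$$\alpha_k\, f_j(a_1,\dots,a_t,r_1,\dots,r_m)=\sum f_j(T_a^{k_1}a_1,\dots,T_a^{k_t}a_t,r_1,\dots,r_m),$$
summed over $k_i\in\{0,1\}$ with $k_1+\dots+k_t=k$; multiplication by a characteristic coefficient is thus literally realized as a sum of further evaluations of $f_j$ obtained by substituting $T_a^{k_i}a_i$ into the alternating variables. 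This identity, not any homogeneity of $\alpha_k$, is the key step missing from your proposal.

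Moreover, the componentwise-splitting step would not stand even if absorption of $f$ were known. First, $\alpha_k(a)$ is a scalar of $K$ attached to a particular substitution $a$, not a polynomial in the indeterminates of $f$, so multiplying evaluations by it does not shift any multidegree; your ``degree of $\alpha$ in the relevant $y$- or $x$-indeterminates'' conflates the grading of $F\{x\}$ with degree in the entries of the substituted matrix. Second, $f(A_0(\Gamma))^+$ is spanned by evaluations of $f$, each of which is the sum of the evaluations of all the $f_j$ at the same point; extracting the individual $f_j(A_0(\Gamma))^+$ from this span requires separating homogeneous components by scaling (Vandermonde) arguments, which fail over the finite base fields that are the main concern of this paper --- this failure is precisely why the paper works with quasi-linear rather than multilinear polynomials. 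So the inclusion cannot simply be read off ``componentwise''; the absorption must be proved for each $f_j$ directly, as the paper does.
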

\begin{proof}
The proof can be formulated in the language of \cite[Theorem~J,
Equation~1.19, page~27]{BR} (with the same proof), as follows,
writing $T_{a,j}$ for the transformation given by left
multiplication by $a$:
\begin{equation}\label{traceab0}
\alpha_k f(a_1, \dots, a_t, r_1, \dots, r_m) = \sum f(T_a^{k_1}a_1,
\dots, T_a^{k_t}a_t, r_1, \dots, r_m),\end{equation} summed over all
vectors $(k_1, \dots, k_t)$ with each $k_i \in \{ 0, 1\}$ and $k_1 +
\dots + k_t = k,$ where $\alpha_k $ is the $k$-th characteristic
coefficient of a linear transformation $T_a: V \to V.$
\end{proof}


%

Since the purpose of Lemma~\ref{q1} was to obtain the conclusion
\eqref{traceab0}, we merely assume~\eqref{traceab0}.
\begin{lem}\label{q2} For any  homogeneous polynomial $f(x_1, x_2,
\dots)$ quasi-linear in $x_1$ with respect to a matrix algebra
$\M[n](F)$, satisfying \eqref{traceab0}, there is a polynomial $\hat
f $ in the T-ideal generated by $f$ which is characteristic
coefficient absorbing.
\end{lem}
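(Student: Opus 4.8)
The plan is to iterate the basic "characteristic coefficient-absorbing" construction one characteristic coefficient at a time, building up a polynomial that simultaneously absorbs multiplication by every $\a_k$ ($1 \le k \le n-1$) in the designated block. First I would fix attention on the variable $x_1$ and the matrix algebra $\M[n](F)$, and recall that \eqref{traceab0} already tells us that the single operation $f \mapsto \sum_{k_1+\cdots+k_t=k} f(T_a^{k_1}a_1, \dots, T_a^{k_t}a_t, r_1, \dots, r_m)$ multiplies the value of $f$ by $\a_k$. The subtlety is that we want a \emph{single} polynomial $\hat f$ in the T-ideal of $f$ whose evaluations form a set closed under multiplication by all the $\a_k$ at once; so I would form, for each $k$, the polynomial $f^{(k)}$ obtained by this substitution (note that $T_a$ is itself realized by a polynomial operation: left multiplication by $a$, which is why $f^{(k)}$ lies in the T-ideal generated by $f$ — it is a sum of specializations of $f$ with the $x_i$ replaced by $x_{i,1}\cdots$ type monomials), and then take $\hat f$ to be a suitable sum or product packaging all the $f^{(k)}$ together.

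Concretely, the cleanest route is to set $\hat f := \sum_{k=0}^{n-1} f^{(k)} w_k$ where $w_0=1$ and the $w_k$ are new indeterminates, or — if we want closure under products of characteristic coefficients, not just under a single one — to iterate: apply the construction to $f^{(k)}$ again for each $k'$, obtaining $f^{(k,k')}$, etc.; since there are only finitely many characteristic coefficients $\a_1, \dots, \a_{n-1}$ and each is a fixed polynomial in the eigenvalues, finitely many iterations produce a polynomial whose evaluation set is stable under the multiplicative monoid generated by all the $\a_k$. The key point that makes this terminate is that $\a_k(a)$ for $a \in \M[n](F)$ is integral of bounded degree over the base, so only finitely many distinct "words" in the $\a_k$ need to be tracked before the span stabilizes; this is exactly the kind of bookkeeping already packaged in \cite[Theorem~4.26]{BRV5} and \cite[\S 6.3]{BRV6}, to which I would appeal rather than re-deriving.

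The main obstacle — and the place where quasi-linearity in $x_1$ is used essentially — is checking that the substitution $x_i \mapsto T_a^{k_i} x_i$ can be realized inside the T-ideal \emph{without disturbing the other variables or the homogeneous degrees in a way that breaks \eqref{traceab0}}: the operator $T_a$ is left multiplication by the semisimple substitution $a$ sitting in the degree-$\tilde n$ block, so $T_a^{k_i}x_i$ must be interpreted as $a^{k_i}x_i$, and for the sum over $(k_1,\dots,k_t)$ to collapse to $\a_k \cdot f$ one needs $f$ to behave additively under the splitting $x_i \mapsto \sum_j a^{j}x_i$-type moves — which is precisely quasi-linearity. I would therefore first verify that the hiked polynomial of Theorem~\ref{hikthm} retains quasi-linearity in the relevant $x_i$ (it does, by construction, since docking inserts central polynomials $h_{\tilde n}(y)$ that do not touch the $x_i$), then invoke \eqref{traceab0} as a black box, and finally assemble $\hat f$ and cite the stabilization argument of \cite{BRV5,BRV6} to conclude that $\hat f$ is characteristic coefficient-absorbing in the sense of Definition~\ref{absorp}. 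The only genuinely new content over those references is the remark that we already start from a docked polynomial, which lets us skip the preliminary steps and go straight to the absorption bookkeeping.
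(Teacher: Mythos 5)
The paper's own proof is one line: take the polynomial already produced in Lemma~\ref{q1}; no new construction is needed. The point you miss is what Definition~\ref{absorp} actually asks for: absorption is a property of the additive span $f(A_0(\Gamma))^+$ of the evaluations, not of any single evaluation, and \eqref{traceab0} --- which in this lemma is a \emph{hypothesis}, not something to be re-derived --- already delivers it. Indeed, for $a$ in a docked diagonal block each $T_a^{k_i}a_i=a^{k_i}a_i$ (with $k_i\in\{0,1\}$) again lies in $A_0$, so the right-hand side of \eqref{traceab0} is a sum of evaluations of $f$ on $A_0$; hence $\a_k\,f(a_1,\dots,a_t,r_1,\dots,r_m)\in f(A_0)^+$, i.e.\ the span absorbs each $\a_k$. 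Closure under products of characteristic coefficients is then automatic, because each multiplication lands back in the same span, so one may simply take $\hat f=f$ (equivalently, the homogeneous component supplied by Lemma~\ref{q1}). There is no ``subtlety'' of absorbing all the $\a_k$ ``at once'' to be overcome.

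Because you do not see this, you introduce machinery that is both unnecessary and unsubstantiated. Your packaged polynomial $\hat f=\sum_k f^{(k)}w_k$ with fresh indeterminates $w_k$ is never shown to satisfy Definition~\ref{absorp}: the identity \eqref{traceab0} is assumed for $f$, not for $f^{(k)}$ or for $f^{(k)}w_k$, so the absorption property of the new span would itself require proof, and your iteration-plus-stabilization step quietly presupposes it. The appeal to integrality of the characteristic coefficients is also misplaced at this stage: integrality (of the symmetrized coefficients) enters later, in Lemma~\ref{symcoef} and Lemma~\ref{Ahatisfinite}, to make $\mathcal A_0$ finite over $\hat C$, not to establish absorption. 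Finally, your claimed ``essential use'' of quasi-linearity --- to realize $x_i\mapsto T_a^{k_i}x_i$ inside the T-ideal --- is off target: the substitution $x_i\mapsto z x_i$ is a legitimate T-ideal operation in any case; quasi-linearity is what produces \eqref{traceab0} in Lemma~\ref{q1}, and in the present lemma that identity is simply assumed.
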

\begin{proof} Take the polynomial of Lemma~\ref{q1}.
\end{proof}

\begin{rem}\label{CHid}
Notation as in \eq{traceab0}, the Cayley-Hamilton identity for $n
\times n$ matrices is
$$\begin{aligned} 0 & = \sum_{k=0}^{n} (-1)^k
\alpha_k f(a_1, \dots, a_t, r_1, \dots, r_m)  \la ^{n-k}   \\ & =
 \sum_{k=0}^{n} (-1)^k \sum_{k_1+\dots+k_t= k} f(T_a^{k_1}a_1, \dots, T_a^{k_t}a_t, r_1, \dots,
r_m) \la ^{n-k},\end{aligned}$$ which is thus an identity in the
T-ideal generated by $f$.
\end{rem}

\begin{defn}\label{HCind}
We call the identity $$\sum_{k=0}^{n} (-1)^k \sum_{k_1+\dots+k_t= k}
f(T_a^{k_1}a_1, \dots, T_a^{k_t}a_t, r_1, \dots, r_m) \la ^{n-k}$$
obtained in Remark~\ref{CHid}, the {\bf Hamilton-Cayley identity
induced by $f$}.
\end{defn}

\begin{defn}\label{6.7}
Fixing $0 \le k <n,$ we denote this implicit definition in
Lemma~\ref{q2} of~$\alpha_k ,$ the $k$-th characteristic coefficient
of $a$, as $\aqpol(a)$.
\end{defn}

If the vertex corresponding to $r$ has matrix degree $n_i$, taking
an $n_i \times n_i$ matrix $w$, we define ${\aqpol}_u(w)$ as in the
action of Definition~\ref{6.7} and then the left action
\begin{equation}\label{mtr1}
a_{u,v} \mapsto {\aqpol}_u (w) a_{u,v}.
\end{equation}
Likewise, for an $n_j \times n_j$ matrix $w$ we define the right
action
\begin{equation}\label{mtr2} a_{u,v} \mapsto a_{u,v} {\aqpol}_v (w).
\end{equation}
(However, we only need the action when the vertex is non-empty; we
forego the action for empty vertices.)

\begin{rem}[For $f$ homogeneous.]\label{modhike1}
Take $\hat f$ of Lemma~\ref{q2}, and one more indeterminate $y''$.
There is a Capelli polynomial $\tilde c_{n_i^2}(y'')$ and $p$-power
$\bar q$ such that
\begin{equation}\label{CC=CC}
\tilde c_{n_i^2}( \a_k y'') x_i c_{{n'_i}^2}(y'') = \a_k ^{\bar
q}(y_1)c_{n_i^2}(y'') x_i c_{{n'_i}^2}(y'')\end{equation} on any
diagonal block. Since characteristic coefficients commute on any
diagonal block, we see from this that
\begin{equation}\label{hikemore}\tilde c_{n_i^2}(y'') x_i
c_{{n'_i}^2(y'')}\tilde c_{n_i^2}(z) x_i c_{{n'_i}^2(z)} - \tilde
c_{n_i^2}(z) x_i c_{{n'_i}^2(z)} \tilde c_{n_i^2}(y'') x_i
c_{{n'_i}^2(y'')}\end{equation} vanishes identically on any diagonal
block, where $z = \a_k y''$. One concludes from this that
substituting~\eqref{hikemore} for $x_i$ would hike $\hat f$ one step
further. But there are only finitely many ways of performing this
hiking procedure. Thus, after a finite number of hikes, we arrive at
a polynomial in which we have complete control of the substitutions,
and the characteristic coefficients defined via polynomials commute.
\end{rem}

\section{Resolving ambiguities for nonhomogeneous
polynomials}

When $f$ is homogeneous, we can skip  this section.
 The non-homogeneous case is more delicate.
Since we may be  in nonzero characteristic,   in the main situation
our quasi-linear hiked polynomials are not homogeneous. In
\S\ref{trab} we obtained actions on each monomial component
separately, but we need to provide a uniform action  on each of
these components.

\subsection{Removing ambiguity of matrix degree for nonhomogeneous
polynomials}$ $

First we want to make sure that we are working in the same matrix
degree for each monomial.

\begin{defn}\label{iso0} A hiked polynomial is  \textbf{uniform}
if there is some indeterminate $x_i$ for which, in each of its
monomials, the molecule obtained from hiking $x_i$ is semisimple of
the same matrix degree.
\end{defn}

Our objective in this section is to hike to a uniform polynomial.
First we use \S\ref{nonmixed} to dispose of the easy case where each
hiked monomial has a semisimple molecule (Definition~\ref{mol}).

\begin{defn}\label{iso} A radical element of a complex  molecule is  \textbf{isolated}
if multiplication by any radical element on the left or right is
zero. \end{defn}

\begin{rem} The product of two isolated elements is 0, by
definition.
\end{rem}

\begin{prop}\label{closed1} Any polynomial can be hiked to a uniform polynomial.
\end{prop}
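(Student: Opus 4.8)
The goal is to upgrade an arbitrary hiked nonidentity $f$ of $A_0$ to a \emph{uniform} one, i.e.\ one in which some hiked indeterminate $x_i$ produces, in every monomial, a semisimple molecule of a single fixed matrix degree. The natural strategy is a double induction, first on the largest matrix degree $\tilde n$ appearing in the quiver (peeling off one degree at a time, as in Theorem~\ref{hikthm7}), and, within a fixed degree, on the number of monomials of $f$ whose hiked molecule at the relevant $x_i$ is \emph{not} semisimple of degree $\tilde n$. If $f$ is already such that one hiked indeterminate gives a uniformly semisimple molecule, there is nothing to do; this is precisely the ``easy case'' flagged after Definition~\ref{iso0}, handled by \S\ref{nonmixed}, so we may assume some monomials have a radical contribution in the relevant molecule.

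The first step is to apply the Hiking Theorem for Polynomials (Theorem~\ref{hikthm7}) to $f$, so that all substitutions of the $x_i$ are right, and $f$ is critical with a dock $h_{\tilde n}(y)$ attached to its molecules. In a critical docked polynomial, every nonzero substitution of the docked indeterminates is $\tilde n$-right, so in each surviving monomial the molecule at the docking site is forced into a block of degree $\tilde n$; the only remaining ambiguity is whether that molecule is semisimple or contains an $\tilde n$-internal radical element (the ``delicate point'' noted after the definition of $m$-right). The second step is to separate these two kinds of monomials. For a monomial whose molecule contains a radical factor $r$, I would use the notion of an \emph{isolated} radical element (Definition~\ref{iso}): by first-stage hiking, a radical element sandwiched between two Capelli components $c_{n}(v)$ and $c_{n'}(v')$ of maximal degree becomes isolated, since any further radical multiplication on either side pushes the evaluation through too many bridges and kills it (Lemmas~\ref{expans15}--\ref{expans17}). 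Then, as in the Remark following Definition~\ref{iso}, a product of two isolated elements vanishes, so after inserting such Capelli sleeves around each radical molecule, the monomials with radical molecules cannot be combined into longer radical chains, and can be stripped off or pushed down to degree $< \tilde n$, where the outer induction hypothesis applies. The monomials with a genuinely semisimple molecule of degree $\tilde n$ are exactly the ones we want, and after the above surgery they all have the molecule at the same site of the same degree, i.e.\ the polynomial is uniform.

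The step I expect to be the main obstacle is the bookkeeping when the indeterminate $x_i$ \emph{repeats}: as warned in Definition~\ref{doccr}, repeated $x_i$ forces the molecules (and their $y$-variables) to repeat, so one cannot independently sleeve one occurrence of the molecule without affecting the others, and a radical evaluation at one occurrence need not be radical at another. The fix is to handle this by the quasi-linearization already in force (Proposition~\ref{quasi}), which lets us split substitutions at each occurrence into pure components, together with the parallel-subtraction trick used repeatedly in \S6 (e.g.\ the polynomial $f_{z_i\mapsto h_{\tilde n}(y')^{d_i}z_i} - f_{z'_{u+1}\mapsto z'_{u+1}h_{n_u}(y')^{t_i}}$ of Proposition~\ref{expans0}), which cancels the unwanted mixed evaluations while preserving the wanted uniform one. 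Since there are only finitely many occurrences and finitely many matrix degrees, this terminates, giving the claimed uniform hiked polynomial; the Kemer invariants are preserved throughout because every operation used is either multiplication by a Capelli polynomial or replacement of a radical element by a commutator of the same form, which is exactly what keeps us within the hypotheses of \cite[Lemma~6.7.3]{BKR}.
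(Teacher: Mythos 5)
Your plan circles the right objects (docking, isolated radical elements, the lemmas on bridge contributions), but it misses the actual mechanism that makes the paper's proof work, and the step you substitute for it does not hold up. The paper's argument is a short iteration: multiply $x_i$ by a \emph{fresh} indeterminate $x_i'$ and hike that occurrence; either the new molecule is semisimple (and we are done at that site), or it yields a radical substitution, in which case one repeats. The whole point is the termination criterion: since $J^{t+1}=0$, after at most $t$ such hikes the accumulated radical string has length $t$, so the resulting radical element is \emph{isolated} in the sense of Definition~\ref{iso}, and the process stops. Your proposal instead claims that a radical element becomes isolated after a single Capelli sleeve, ``since any further radical multiplication pushes the evaluation through too many bridges,'' citing Lemmas~\ref{expans15}--\ref{expans17}. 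Those lemmas only say that a nonzero specialization of $h$ is $\tilde n$-semisimple or has positive bridge contribution; they do not make a sandwiched radical element isolated. Indeed $r' c_{\tilde n}(v)\, r\, c_{\tilde n}(v')$ can be nonzero whenever the quiver has a radical arrow entering the relevant degree-$\tilde n$ block and the total radical length is still at most $t$, so isolation genuinely requires the iteration bounded by the nilpotence index, which is absent from your argument.

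A second gap is the disposal of the monomials whose molecule is radical: you say they ``can be stripped off or pushed down to degree $<\tilde n$, where the outer induction hypothesis applies.'' Stripping monomials off a polynomial is not an operation available inside the T-ideal, and uniformity (Definition~\ref{iso0}) is a statement about \emph{every} monomial of one polynomial, so you must exhibit a single hiked polynomial all of whose monomials have a semisimple molecule of the same matrix degree at the chosen $x_i$ --- you cannot quotient out the bad monomials by fiat, and no decreasing quantity is identified when a radical molecule persists at degree $\tilde n$. (Your concern about repeated occurrences of $x_i$ is legitimate, but it is exactly what the fresh indeterminate $x_i'$ in the paper's proof is for; the quasi-linearization plus parallel-subtraction machinery you propose is not needed here.) In short, the missing idea is the nilpotence bound $J^{t+1}=0$ driving a finite sequence of hikes with new indeterminates until either a semisimple molecule or an isolated element appears; without it, your induction has no termination and your isolation claim is unjustified.
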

\begin{proof} Multiply $x_i$ by a new indeterminate $x_i'$ and hike that.
We are done unless it yields a radical substitution. Since $J^{t+1}
= 0$, we get an isolated element after at most $t$ hikes.

\end{proof}

\subsection{Removing ambiguities  for nonhomogeneous
polynomials having molecules of the same matrix degree}$ $

We have just reduced to the case where all monomials have molecules
of some $x_i$ of the same matrix degree $\tilde n$, but we still must
contend with the possibility that $x_i$ has different degrees in
different monomials, and then our characteristic coefficient
arguments work differently for the different monomials.

Take the finitely many matrix components $R_i, 1 \le i \le k$, each
of which has degree $\tilde n$ and multiplicity $q_i$. Multiplication by
characteristic coefficients $\alpha _i$ is integral over the
multiplication by ~ $\alpha _i^{q_i}$.

 We introduce a commuting indeterminate $\la_i$ for each of the finitely many characteristic
coefficients $\alpha _i,$ $i \in I$, define $C'$ to be $\hat C[\la_i
: i \in I]$. Defining the action of $\la_i$ on $R_i$ to be the same
as that of $\alpha _i$, we have $(\la_i-\alpha _i)R_i = 0 .$

\begin{defn}\label{sym} Given matrices $a_1, \dots, a_t,$ the \textbf{symmetrized} $(k;j)$ characteristic coefficient is the $j$-elementary symmetric function applied to the $k$-characteristic
coefficients of $a_1, \dots, a_t.$ \end{defn} For example, taking
$k=1$, the symmetrized $(1,j)$-characteristic coefficients $\a_t$
are
$$\sum_{j=1}^t \tr(a_j),\quad \sum_{j_1 >j_2} \tr         (a_{j_1})\tr(a_{j_2}),\quad \dots , \quad \prod_{j=1}^t \tr(a_{j}).$$

\begin{lem}\label{symcoef} Any characteristic coefficient $\a _k$ is integral over the
ring  with all the  symmetrized  characteristic coefficients
adjoined.\end{lem}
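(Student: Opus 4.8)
Any characteristic coefficient $\a_k$ is integral over the ring with all the symmetrized characteristic coefficients adjoined.

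The plan is to invoke the elementary fact that a finite family of elements of a commutative ring is integral over the subring generated by their elementary symmetric functions. Concretely, fix $k$ and set $b_j := \a_k(a_j)$ for $j = 1, \dots, t$, so that by \Dref{sym} the symmetrized $(k;i)$ characteristic coefficients are precisely the elementary symmetric polynomials $e_i := e_i(b_1, \dots, b_t)$ for $1 \le i \le t$ (with $e_0 = 1$). Let $B$ denote the subring generated by all the $e_i$ (over whatever ground ring is in force); it is contained in the ring obtained by adjoining all the symmetrized characteristic coefficients.

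First I would form the monic polynomial
$$p(\la) \;=\; \prod_{j=1}^{t}(\la - b_j) \;=\; \sum_{i=0}^{t} (-1)^i e_i \,\la^{t-i},$$
whose coefficients all lie in $B$. Since $p(b_j) = 0$ for every $j$, each $b_j = \a_k(a_j)$ is integral over $B$, hence a fortiori integral over the ring obtained by adjoining all the symmetrized characteristic coefficients. As this holds for every $k$, every characteristic coefficient $\a_k$ is integral over that ring, which is the assertion.

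Two points deserve a remark rather than constituting a genuine obstacle. First, the argument uses the $e_i$ directly as the coefficients of $p(\la)$, so --- unlike a power-sum/Newton-identity approach --- it is valid over an arbitrary commutative ring and is insensitive to the characteristic, which matters here since the base field may be finite. Second, one must keep track of which ring is meant: when the $\la_i$ introduced just above are adjoined as commuting indeterminates acting through $\alpha_i$ with $(\la_i - \alpha_i)R_i = 0$, the relation $p(\la) = 0$ is the specialization of this universal integral relation and descends to the actual action on each $R_i$. The real content is not the lemma but its use: it licenses replacing the per-monomial characteristic-coefficient actions of \S\ref{trab} by a single action through the symmetrized coefficients, uniform across the monomials of a nonhomogeneous $f$, with the lemma guaranteeing that no information is lost in passing to the symmetrized ring.
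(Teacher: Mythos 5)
Your proof is correct and is essentially the paper's own argument: both observe that $\a_k(a_j)$ is a root of the monic polynomial $\prod_j(\la - \a_k(a_j))$ whose coefficients are, up to sign, exactly the symmetrized characteristic coefficients of \Dref{sym}, giving integrality at once. Your write-up is in fact more careful than the paper's terse version (which has garbled indices), but there is no difference in approach.
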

\begin{proof} If $\a _{k,t}$ denotes the $(t;j)$-characteristic
coefficient, then  $\a _k$  satisfies the usual polynomial $\la ^ n+
(-1)^j \sum _{j=1}^n \a _{k,j} \la ^{t-j}.$
\end{proof}

\begin{rem}\label{symchar} The reason that we need to introduce the symmetrized characteristic
coefficient is that we might have several glued components and their
molecules, which we cannot distinguish, so we need to find
coefficients common to all of them.
\end{rem}

\begin{prop}\label{q22} Take $\hat f$ of Lemma~\ref{q2}. There is a uniform polynomial $\tilde f $
hiked from $\hat f$ which is characteristic coefficient absorbing.
\end{prop}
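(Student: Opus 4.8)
The plan is to combine the two reductions already established in this section — the passage to a \emph{uniform} polynomial (Proposition~\ref{closed1}) and the presence of a characteristic coefficient--absorbing polynomial $\hat f$ (Lemma~\ref{q2}) — and to check that the uniformizing hikes do not destroy the absorption property, once absorption is phrased in terms of the symmetrized characteristic coefficients of Definition~\ref{sym}. First I would apply Proposition~\ref{closed1} to $\hat f$ to obtain a hiked polynomial that is uniform: there is an indeterminate $x_i$ whose molecule is semisimple of the common matrix degree $\tilde n$ in every monomial. The point of working with $\hat f$ rather than an arbitrary starting polynomial is that the hikes used in the proof of Proposition~\ref{closed1} are substitutions $x_i\mapsto x_ix_i'$ composed with docking-type replacements, and each such substitution carries a characteristic coefficient--absorbing polynomial to another one: this is exactly the content of Equation~\eqref{traceab0} together with Lemma~\ref{q1}, since the absorbing identity is preserved under the algebra endomorphisms of $F\{x\}$ that underlie hiking (the left-multiplication operators $T_a$ commute with such substitutions on the docked block). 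So uniformization preserves absorption.

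Next I would address the one genuinely new difficulty: in the uniform polynomial the absorbing indeterminate $x_i$ may still occur with different degrees in different monomials, so the characteristic coefficient extracted via \eqref{traceab0} is a priori the $k$-characteristic coefficient of a \emph{different} power-sum transformation in each monomial, and these do not match across monomials. The fix, as signalled in Remark~\ref{symchar}, is to absorb not the individual $\alpha_k(a_j)$ but the \emph{symmetrized} $(k;j)$-characteristic coefficients of Definition~\ref{sym}: these are symmetric functions of the per-monomial coefficients and hence are the same element of $C'$ regardless of which monomial produced them. By Lemma~\ref{symcoef}, each individual characteristic coefficient $\alpha_k$ is integral over the ring generated by the symmetrized ones, so absorbing the symmetrized coefficients is enough to obtain integrality over $A_0$ in the eventual Shirshov-extension argument, which is all we need. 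Concretely, I would iterate the hike of Remark~\ref{modhike1} — substituting an expression of the form \eqref{hikemore} for the absorbing indeterminate, but now with $z=\alpha_{k,j}\,y''$ a symmetrized characteristic coefficient rather than a bare $\alpha_k$ — running over the finitely many pairs $(k,j)$ with $0\le k<\tilde n$ and $1\le j\le t$; since each such hike has only finitely many forms and there are finitely many coefficients, the process terminates in a polynomial $\tilde f$ whose docked block absorbs multiplication by every symmetrized characteristic coefficient.

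Finally I would check that the hikes of Remark~\ref{modhike1} preserve uniformity: they only insert Capelli factors $\tilde c_{n_i^2}$, $c_{{n_i'}^2}$ around the absorbing indeterminate on each docked block, so the molecule of $x_i$ remains semisimple of matrix degree $\tilde n$ in every monomial, and the polynomial stays quasi-linear in $x_i$ as required by Definition~\ref{absorp}. Assembling the pieces: start with $\hat f$ of Lemma~\ref{q2}; uniformize via Proposition~\ref{closed1} (absorption preserved by \eqref{traceab0}); then iterate the symmetrized version of the Remark~\ref{modhike1} hike finitely many times to get $\tilde f$, which is uniform, quasi-linear, and characteristic coefficient absorbing in the blocks of degree $\tilde n$, this last now understood with respect to the symmetrized characteristic coefficients. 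I expect the main obstacle to be the bookkeeping in the second step — verifying that the absorption identity \eqref{traceab0}, which is stated monomial-by-monomial in Lemma~\ref{q1}, really does descend to a single uniform action after uniformization, and that replacing $\alpha_k$ by $\alpha_{k,j}$ throughout Remark~\ref{modhike1} still yields a valid hike (i.e. that \eqref{CC=CC} and the commutation of \eqref{hikemore} survive with symmetrized coefficients, which they do because symmetric functions of commuting coefficients again commute and are again realized by Capelli-type polynomials on each diagonal block).
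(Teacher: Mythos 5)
Your argument is correct and follows essentially the same route as the paper's (quite terse) proof: the paper likewise adjoins the characteristic values of the complex molecules and applies Proposition~\ref{closed1} to Lemmas~\ref{q1} and~\ref{q2}, with the symmetrized coefficients of Definition~\ref{sym}, Lemma~\ref{symcoef} and Remark~\ref{symchar} resolving the per-monomial mismatch exactly as you describe. You have merely spelled out in more detail the points the paper leaves implicit (preservation of absorption under the uniformizing hikes via \eqref{traceab0}, and the iteration of Remark~\ref{modhike1} with symmetrized coefficients).
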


\begin{proof}
 $\hat f$ has no semisimple evaluations into a maximal
component, and thus (inductively) all evaluations of $\hat f$
involving a maximal component have a ``radical bridge,'' i.e., $\hat
f$ has a monomial $h$ where   the indeterminates $x_i$ specialize to
external radical substitutions. In particular $n_i =  \tilde n =
n_1.$

We adjoin the characteristic values of all the complex molecules.
Then we apply Proposition~\ref{closed1} to Lemmas~\ref{q1} and
~\ref{q2}.
\end{proof}

\section{Application of Shirshov's theorem}

Here is the connection to full quivers.
\begin{defn}\label{Ahat}
For a \Zcd\ algebra $A\sub \M[n](K)$ faithful over an integral
domain $C$, we denote by $\hat C$ the algebra obtained by adjoining
to $C$ the  symmetrized characteristic coefficients of products of
the Peirce components of the generic generators of~$A$ (of length up
to the bound of Shirshov's Theorem~ \cite[Chapter 2]{BR}).
\end{defn}

 { \bf{Characteristic Value Adjunction Theorem
\cite[Theorem~3.22]{BRV5}}}\label{tradj}.
  Let $\mathcal A_0$ denote the algebra obtained by adjoining to $A_0$
the
  characteristic matrix coefficients of
products of the sub-Peirce components of the generic generators of
$A_0$ (of length up to the bound of Shirshov's Theorem~
\cite[Chapter 2]{BR}), and let $\hat C$ be the algebra obtained by
adjoining to $F$ these symmetrized characteristic coefficients. The
T-ideal~$\CI$ generated by the polynomial~$\tilde f$ contains a
nonzero T-ideal which is also an ideal of the algebra~$\hat A_0$.
Recall in view of Shirshov's theorem that we only need to adjoin a
finite number of elements to obtain $\hat C.$

\begin{lem}\label{Ahatisfinite}
The algebra $\mathcal A_0$ is a finite module over $\hat C$, and in
particular is Noetherian and representable.
\end{lem}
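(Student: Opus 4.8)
The plan is to prove that $\mathcal A_0$ is a finite $\hat C$-module, from which Noetherianity is immediate (a finite module over a Noetherian — in fact affine, hence Noetherian — ring is Noetherian) and representability follows since a finitely generated module over a commutative Noetherian ring embeds, after localizing/passing to a suitable field of fractions, into a matrix ring over that field (or more directly: $\mathcal A_0 \subseteq \M[n](K')$ for $K'$ the fraction field of $\hat C$, since by construction all the adjoined characteristic coefficients live inside $\M[n](K)$).

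First I would recall the construction: $\mathcal A_0$ is obtained from $A_0 \subseteq \M[n](K)$ by adjoining the characteristic (matrix) coefficients of products of the sub-Peirce components of the generic generators of $A_0$, taken only up to the Shirshov length bound; and $\hat C$ is $F$ (equivalently $C$) with the corresponding \emph{symmetrized} characteristic coefficients adjoined. The key point is that, by Shirshov's Theorem applied inside each Peirce block of degree $n_i$ (where the relevant matrix algebra $\M[n_i](K)$ is f.d.\ over its center), the whole algebra $\mathcal A_0$ is spanned over $\hat C$ by a finite set of Shirshov monomials in the generic generators: any longer monomial can be rewritten, via the Shirshov height/straightening argument together with the Cayley–Hamilton relations that express high powers of a single generator in terms of lower powers with coefficients that are exactly the adjoined characteristic coefficients, as a $\hat C$-combination of monomials of bounded length. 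That there are only finitely many such bounded-length monomials in finitely many generators gives the finite spanning set.

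The one subtlety — and the step I expect to be the main obstacle to write carefully — is bookkeeping across the glued/Frobenius-twisted components: a product of Peirce components can range over several matrix blocks of possibly different matrix degrees, and its "characteristic coefficient" is block-dependent, so one must use the symmetrized characteristic coefficients (Definition~\ref{sym}, Lemma~\ref{symcoef}) to get elements of $\hat C$ that simultaneously govern all the glued copies, and then invoke Proposition~\ref{obv2} to pass between the characteristic coefficients of $a$ and of $\tilde a = a\otimes I$ so that the left/right multiplication actions \eqref{mtr1}–\eqref{mtr2} are themselves integral over $\hat C$. Granting these (all available from the cited results above), the finiteness is the standard Shirshov-plus-Cayley–Hamilton argument as in \cite[Chapter~2]{BR}, and Noetherianity and representability are then formal consequences.
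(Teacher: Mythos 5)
Your treatment of the finiteness claim is essentially the paper's argument, just packaged differently. The paper proceeds in two explicit steps: it introduces $\hat C'$, the commutative algebra generated over $C$ by \emph{all} the blockwise characteristic coefficients of the bounded-length products of Peirce components, observes that $\mathcal A_0' = \hat C'\mathcal A_0$ is a finite module over $\hat C'$ by Shirshov's theorem (the Cayley--Hamilton rewriting you describe), and then uses Lemma~\ref{symcoef} to see that $\hat C'$ is finite over $\hat C$, whence $\mathcal A_0\subseteq \mathcal A_0'$ is finite over the affine (hence Noetherian) ring $\hat C$. Your one-step phrasing ``spanned over $\hat C$ by bounded-length monomials'' is not literally correct, since the Cayley--Hamilton relations produce the non-symmetrized, block-dependent coefficients, i.e.\ elements of $\hat C'$ rather than of $\hat C$; but you flag exactly this subtlety and invoke Lemma~\ref{symcoef} (and Proposition~\ref{obv2}), so up to this reorganization you are doing what the paper does.

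The genuine problem is your justification of representability. Your main route --- localize at $\hat C$ and embed $\mathcal A_0$ into matrices over the field of fractions of $\hat C$ --- is not valid in general: $\hat C$ need not be a domain, and even when it is, the natural map from $\mathcal A_0$ to $\mathcal A_0\otimes_{\hat C}\mathrm{Frac}(\hat C)$ kills $\hat C$-torsion, so its injectivity is precisely what would have to be proved; finiteness as a module gives no help here. The paper does not argue this way: having established that $\mathcal A_0$ is a finite module over the affine commutative ring $\hat C$, hence Noetherian, it deduces representability from Anan'in's theorem \cite{An} on representability of finitely generated Noetherian (PI-)algebras. Your parenthetical fallback, that $\mathcal A_0\subseteq \M[n](K)$ ``by construction,'' is closer to something salvageable when the adjunction of the characteristic coefficients is carried out inside the fixed representation, but it is not the argument the paper makes, and as your primary argument stands the representability step has a hole; the clean fix is to quote Anan'in's theorem, as the paper does.
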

\begin{proof}
Let $\hat C'$ be the commutative algebra generated over $C$, by all
the characteristic coefficients of (finitely many) products of the
 Peirce components of the generic generators of $A_0$, as in
\Dref{Ahat}. Clearly $\hat{C} \sub \hat{C}'$.

Enlarge $\mathcal A_0 $ to $\mathcal A_0' = \hat{C}'\mathcal A_0 $,
which is a finite module over $\hat{C}'$ in view of Shirshov's
Theorem. But $\hat C'$ is finite over $\hat C,$ in view of
Lemma~\ref{symcoef}, implying $\mathcal A_0$ is finite over~$\hat
C$.
 Thus  $\mathcal A_0$  is Noetherian, and is representable by Anan'in's Theorem \cite{An}.
\end{proof}

Also, for any characteristic coefficient-absorbing polynomial $f$
with respect to the quiver of $A_0$, the Hamilton-Cayley identity
induced by $f$ is an identity of~$\mathcal A_0$, and thus of $A_0$.

\section{Conclusion of the proof of
Theorem~\ref{4.66}}\label{CIO}$ $

\begin{proof}
%

  Let  $\CI $ be the T-ideal generated by $\tilde f,$ and $\CI_1$ be the
  T-ideal of $\hat {A_0}$ generated by symmetrized $\bar{q}$-characteristic
coefficient-absorbing polynomials of $\CI$  in $\tilde A: = \hat
C' A_0.$ The ideal $\CI_0$ is representable by
Lemma~\ref{Ahatisfinite}, implying $A_0 \cap \CI_1$ is
representable, as desired.\end{proof}
%


\end{document}

\bibitem{KombMiyanMasayoshi}

 \by Kambayashi, Tatsuji; Miyanishi, Masayoshi; Takeuchi, Mitsuhiro.

 \paper Unipotent algebraic groups.

 \book{Lecture Notes in Mathematics}, \yr 1974, \vol 414,

 Springer-Verlag, Berlin-New York.